\newcommand{\id}{\mathrm{id}}
\theoremstyle{plain}
\newtheorem{theorem}{Theorem}[section]
\newtheorem*{theorem*}{Main Theorem}
\newtheorem{lemma}[theorem]{Lemma}
\newtheorem{proposition}[theorem]{Proposition}
\newtheorem{corollary}[theorem]{Corollary}
\theoremstyle{definition}
\newtheorem{example}{Example}[section]
\DeclareMathOperator{\cyc}{cyc}
\DeclareMathOperator{\ecyc}{ecyc}
\DeclareMathOperator{\codim}{codim}
\DeclareMathOperator{\Inv}{Inv}
\DeclareMathOperator{\fl}{fl}
\title{Hultman elements for the hyperoctahedral groups}
\author{Alexander Woo}
\address{Dept.~of Mathematics, University of Idaho, Moscow, ID 83844-1103}
\email{awoo@uidaho.edu}
\date{\today}
\begin{document}

\begin{abstract}
Hultman, Linusson, Shareshian, and Sj\"ostrand gave a pattern avoidance characterization of the permutations
for which the number of chambers of its associated inversion arrangement is the same as the size of its lower interval
in Bruhat order.  Hultman later gave a characterization, valid for an arbitrary finite reflection group, in terms of distances
in the Bruhat graph.  On the other hand, the pattern avoidance criterion for permutations had earlier appeared in
independent work of Sj\"ostrand and of Gasharov and Reiner.  We give characterizations of the elements of the
hyperoctahedral groups satisfying Hultman's criterion that is in the spirit of those of Sj\"ostrand and of Gasharov and
Reiner.  We also give a pattern avoidance criterion using the notion of pattern avoidance defined by Billey and Postnikov.
\end{abstract}

\maketitle

\section{Introduction}

Let $w\in S_n$ be a permutation, and let $H_{i,j}$ denote the
hyperplane in $\mathbb{R}^n$ defined by $x_i=x_j$.  The {\bf inversion
  arrangement} of $w$ is the hyperplane arrangement in $\mathbb{R}^n$
given by $$\mathcal{A}_w:=\{H_{i,j}\mid i<j, w(i)>w(j)\}.$$

Any hyperplane arrangement $\mathcal{A}$ cuts $\mathbb{R}^n$ into a
number of {\bf chambers}, which are defined as the connected
components of $\mathbb{R}^n\setminus \mathcal{A}$.  Hence we can
associate an invariant $c(w)$, the number of chambers of
$\mathcal{A}_w$, to any permutation $w$.

On the other hand, if we let $[\id,w]$ denote the interval from the
identity to $w$ in Bruhat order, we can also associate to $w$ the
invariant $s(w):=\#[\id,w]$.  Following a conjecture of
Postnikov~\cite[Remark 24.2]{Pos}, Hultman, Linusson, Shareshian, and
Sj\"ostrand~\cite{HLSS} proved that $c(w)\leq s(w)$ for any permutation
$w$, and, furthermore, that $c(w)=s(w)$ if and only if $w$ avoids the
permutations 4231, 35142, 42513, and 351624.

The concepts defined above make sense for an arbitrary finite
reflection group $W$.  Given any element $w\in W$, one can define an
inversion arrangement $\mathcal{A}_w$ and an invariant $c(w)$ counting
its chambers.  There is also a notion of Bruhat order, so one can
associate to $w$ an invariant $s(w)$.  In slightly later work,
Hultman~\cite{Hult} observed that $c(w)\leq s(w)$ for any $w\in W$,
using essentially the same proof as in~\cite{HLSS}, and
gave a characterization of the elements $w$ for
which $c(w)=s(w)$ in terms of certain conditions on distances in the
Bruhat graph of $W$.  We will call the
elements $w\in W$ for which $c(w)=s(w)$ {\bf Hultman elements}.

The set of permutations avoiding 4231,
35142, 42513, and 351624 had previously appeared in two independent
but related places in the literature.  A permutation is said to be
{\bf defined by inclusions} if the interval $[\id,w]$ is defined by
inclusion conditions.  To be more precise, this means there are
(possibly empty) sets $B(w)$ and $T(w)$ such that $u\in[\id,w]$ if and
only if $\{u(1),\ldots,u(q)\}\subseteq \{1,\ldots,p\}$ for all $(p,q)\in
B(w)$ and $\{1,\ldots,p\}\subseteq \{u(1),\ldots,u(q)\}$ for all
$(p,q)\in T(w)$.  Gasharov and Reiner~\cite{GR} showed that a
permutation is defined by inclusions if and only if it avoids 4231,
35142, 42513, and 351624.  The {\bf right hull} of a permutation $w$
is the rectilinear convex hull of its graph and the points
$(1,1)$ and $(n,n)$.  A permutation $w$ satisfies the {\bf right hull
  condition} if $u\leq w$ for every permutation $u$ whose graph fits
in the right hull of $w$.  Sj\"ostrand~\cite{Sjo} showed that the same
set of permutations are the ones satisfying the right hull condition.

Our goal in this paper is to give a characterization of Hultman
elements for the hyperoctahedral groups $B_n$
that is in the spirit of the Gasharov--Reiner and Sj\"ostrand
characterizations for $S_n$, considering $B_n$ as a subgroup of $S_{2n}$.
We say that an element $w\in B_n\subseteq S_{2n}$ is {\bf defined
by pseudo-inclusions} if it is defined by inclusions (as an element of $S_{2n}$),
possibly with the additional condition $\#(\{u(1),\ldots,u(n)\}\cap \{1,\ldots,n\})=n-1$.
One can similarly define a relaxation of the right hull condition.
Furthermore, there is a generalization of pattern avoidance to arbitrary Coxeter
groups due to Billey and Postnikov~\cite{BP}, which we call {\bf BP avoidance}.
Our main theorem is as follows.  (Precise definitions are given in Section~\ref{sect:prelim}.)

\begin{theorem*}
Let $w\in B_n$.  Then the following are equivalent.
\begin{enumerate}
\item The number of chambers of the inversion arrangement $\mathcal{A}_w$ is equal to the number of elements in $[\id,w]$.
\item For any $u\leq w$, the directed distance from $u$ to $w$ in the Bruhat graph is the same as the undirected distance from $u$ to $w$.
\item The element $w$ is defined by pseudo-inclusions.
\item The element $w$ satisfies the relaxed right hull condition.
\item The element $w$ BP avoids $4231\in S_4$, $35142$, $42513\in S_5$, $351624\in S_6$, $563412$, $653421$, $645231$, $635241$, $624351$, $642531$, $536142$, $426153$, $462513$, $623451\in B_3$, $47618325$, $46718235$, $57163824$, $37581426$, $47163825$, $46172835$, $37518426$, $35718246$, $37145826$, $37154826$, $52618374$, $42681375$, $42618375$, $35172846\in B_4$, and $3517294a68$, $3517924a68$, $3617294a58\in B_5$.
\end{enumerate}
\end{theorem*}

The equivalence of the first 2 statements is due to Hultman.  Our proof showing the equivalence of the second condition with the remainder largely follows the proof of Hultman~\cite{Hult} recovering the pattern avoidance condition due to Hultman, Linusson, Shareshian and Sj\"ostrand~\cite{HLSS} from the distance condition, though a number of details are significantly
more complicated.

First we give a proof of the equivalence of the third and fourth conditions; this direct
proof of equivalence is new even for $S_n$.  Afterwards, given an element $w\in B_n$ satisfying the
relaxed right hull condition, we give a more complicated variant of Hultman's proof showing that the distance condition is satisfied in the Bruhat graph.

Second, given an element $w\in B_n$ not defined by pseudo-inclusions, we show that $w$ must BP contain
an element of $S_m$ or $B_m$ not defined by pseudo-inclusions for some $m\leq 5$.  A computer
calculation then shows that the above list is the minimal possible.  Another calculation shows that none
of the elements on the list satisfy the distance condition.  We finally prove that, if $w$ BP contains $u$ and
$u$ fails to satisfy the distance condtion, then $w$ must also fail to satisfy the distance condition.  It is possible
to avoid the first calculation by a more intricate but quite tedious version of the argument of Gasharov
and Reiner.

One can replace BP avoidance with avoidance of signed permutations at the cost of significantly
lengthening the list of patterns to be avoided.  We will see, however, that BP avoidance is the natural
notion to use in this context.

This work has several natural possible extensions.  First, it would be interesting to extend this
characterization to all finite reflection groups.  An extension to type D using the methods of this paper is likely possible and
may be the subject of a future paper.  An underlying principle guides the expected extension.  Fulton~\cite{Ful92} defined
the notion of the {\bf essential set} $E(w)$ of a permutation $w$, which is a set of conditions that characterizes when $u\in[\id,w]$.
Given $w\in S_n$, one can associate to each element of $(p,q)\in E(w)$ a permutation $v(p,q,r_w(p,q))$ such that $u$ fails the condition specified by $(p,q,r)$ if and only if $v\geq u(p,q,r)$.   Given an element $w\in B_n\subseteq S_{2n}$, we can similarly associate to each pair $$\{(p,q,r), (2n+2-p,2n-q, p-q-1+r)\}\subseteq E(w)$$ an element $$v(p,q,r)=v(2n+2-p, 2n-q, p-q-1+r)\in B_n$$ such that $u\in B_n$ fails both conditions if and only if $u\geq v(p,q,r)$.  It turns out that, for both $S_n$ and $B_n$, an element $w$ is Hultman if and only if $v(p,q,r)$ has only one reduced expression for all $(p,q,r)\in E(w)$.  Unfortunately, this condition cannot be stated in terms of the Coxeter-theoretic definition of coessential set given by Reiner, Yong, and the author in~\cite{RWY} because some conditions in $E(w)$ can be implied by other conditions in $E(w)$ when only considering elements of $B_n$.  We discuss this in Section~\ref{sect:essential} using recent work of Anderson~\cite{And}.  Given the nature of Hultman's proof and ours,
we expect the recent work of Gobet~\cite{Gobet} on analogues of cycle decompositions for finite reflection groups to be useful in any general proof of a general statement for all finite reflection groups.

Furthermore, several additional results
concerning the permutations defined by inclusions have potential analogues in $B_n$.
Lewis and Morales~\cite{LM} showed that, if $w$ is a permutation defined by inclusions, then
the number of invertible matrices over $\mathbb{F}_q$
supported on the complement of the diagram of $w$ is related to the
rank generating function of the interval $[\id,w]$ and hence is a $q$-analogue of $s(w)$,
but no analogous result for $B_n$ has been found.
Gasharov and Reiner~\cite{GR} gave a presentation of the cohomology ring
$H^*(X_w)$ for a Schubert variety corresponding to a permutation $w\in S_n$ defined
by inclusions; indeed they were originally interested only in the case where
$X_w$ is smooth and defined the notion of a permutation defined by
inclusions because their results naturally generalized to this case.  It would
be interesting to extend their results to Schubert varieties associated to
Hultman elements of $B_n$.  It is likely that any presentation of the
cohomology ring would involve the theta polynomials of Buch, Kresch, and
Tamvakis~\cite{BKT}.  Also, Ulfarsson and the author showed that the
Kostant--Kumar polynomials for permutations defined by inclusions are products
of not necessarily distinct roots~\cite[Cor. 6.6]{UW}.  It would be interesting to prove the
converse as well as extend this result to $B_n$.  Finally, Albert and
Brignall~\cite{AB} computed the generating function counting permutations defined
by inclusions; one possible path to an analogous result for $B_n$ would
be to understand the proof of Albert and Brignall in the context of the
staircase diagrams of Richmond and Slofstra~\cite{RS}.

We organize the paper as follows.  Section~\ref{sect:prelim} gives various preliminaries,
including some details of how BP pattern avoidance and cycle decomposition work for $B_n$.
The details on BP avoidance for $B_n$ do not seem to have previously appeared in print.
Section~\ref{sect:proof} contains the proof of our main theorem.  Section~\ref{sect:essential}
remarks on the relationship between our main theorem and the Coxeter-theoretic coessential
set.

\section{Preliminaries}
\label{sect:prelim}
\subsection{Bruhat order and inversion arrangements for permutations}

A {\bf transposition} is a permutation of the form $t_{i,j}:=(i\,\,j)$, swapping two elements and fixing all the others.
A {\bf simple transposition} is a transposition $s_i:=t_{i, i+1}$.  Let $w\in S_n$ be a permutation.  The {\bf length} of
$w$, denoted $\ell(w)$, is the minimum number of simple transpositions such that $w$ is a product of $\ell(w)$ simple
transpositions.  A pair $(i,j)$ with $1\leq i<j\leq n$ is an {\bf inversion} of $w$ if $w(i)>w(j)$.  It turns out that $\ell(w)$
is the number of inversions of $w$.  The {\bf absolute length} of $w$, denoted $\ell_T(w)$, is the minimum number of
(not necessarily simple) transpositions such that $w$ is a product of $\ell_T(w)$ transpositions.  Let $\cyc(w)$ be the
number of cycles in the cycle decomposition of $w$.  Then $\ell_T(w)=n-\cyc(w)$ for $w\in S_n$.

The symmetric group $S_n$ has a partial order known as {\bf Bruhat order}.  It can be defined as the transitive closure
of the covering relation where $u\prec w$ if $w=ut_{i,j}$ for some transposition $t_{i,j}$ and $\ell(w)=\ell(u)+1$.  Alternatively, Bruhat order can also be defined by the {\bf tableau criterion}.  Given $p,q$ with $1\leq p,q\leq n$, define $$r_w(p,q):=\#\{k\mid 1\leq k\leq q, p\leq w(k)\leq n\}.$$  The function $r_w$ is called the {\bf SW rank function} of $w$.  Then $u\leq w$ if and only if $r_u(p,q)\leq r_w(p,q)$ for all $p,q$.  We let $s(w)$ be the number of elements in the interval $[\id,w]$.

\begin{example}
Consider $w=35142$ and $u=13254$.  (All permutations in this paper are written in one line notation, except that $(i\,\,j)$ denotes the transposition switching $i$ and $j$.)  We have $u<w$ since $$u=13254\prec 31254\prec 32154\prec 35124\prec 35142=w.$$
On the other hand, the rank functions $r_u$ and $r_w$, displayed with values in a matrix, are
$$r_u=
\begin{matrix}
1 & 2 & 3 & 4 & 5 \\
0 & 1 & 2 & 3 & 4 \\
0 & 1 & 1 & 2 & 3 \\
0 & 0 & 0 & 1 & 2 \\
0 & 0 & 0 & 1 & 1
\end{matrix}\,\,\,
\mbox{ and }
r_w=
\begin{matrix}
1 & 2 & 3 & 4 & 5 \\
1 & 2 & 2 & 3 & 4 \\
1 & 2 & 2 & 3 & 3 \\
0 & 1 & 1 & 2 & 2 \\
0 & 1 & 1 & 1 & 1
\end{matrix}.$$
We see that every entry in $r_u$ is smaller than the corresponding entry of $r_w$.
\end{example}

The {\bf inversion arrangement} of $w$  is the hyperplane arrangement $\mathcal{A}_w$ in $\mathbb{R}^n$ consisting of the hyperplanes defined by $x_i-x_j=0$ for all inversions $(i,j)$ of $w$.  The {\bf chambers} of a hyperplane arrangement $\mathcal{A}$ are the connected components of $\mathbb{R}^n\setminus(\bigcup_{H\in\mathcal{A}} H)$.  We let $c(w)$ denote the number of chambers of $\mathcal{A}_w$.

\begin{example}
For $w=w_0$, where $w_0$ is the
permutation with $w_0(i):=n+1-i$ for all $i$, the arrangement $\mathcal{A}_{w_0}$ is the full braid arrangement for $S_n$, and $c(w)=n!$.  For $w=\id$, $\mathcal{A}_\id$ is the empty arrangement.  If $w=3412\in S_4$, then $\mathcal{A}_w$ consists of the hyperplanes defined by $x_4-x_1$, $x_3-x_2$, $x_3-x_1$, and $x_4-x_2$, and $c(3412)=14$.  If $w=4231\in S_4$, then $\mathcal{A}_w$ consists of the hyperplanes defined by $x_4-x_1$, $x_4-x_2$, $x_4-x_3$, $x_3-x_1$, and $x_2-x_1$, and $c(4231)=18$.
\end{example}

Given permutations $v\in S_m$ and $w\in S_n$ with $m\leq n$, we say that $w$ {\bf (pattern) contains} $v$ if there exist indices $1\leq i_1<\cdots<i_m\leq n$ such that, for all $j,k$ with $1\leq j<k\leq m$, $v(j)>v(k)$ if and only if $w(i_j)>w(i_k)$.  For example, $w=48631725$ contains $v=35142$ using indices $i_1=1$, $i_2=2$, $i_3=5$, $i_4=6$, and $i_5=7$.  We say $w$ {\bf (pattern) avoids}
$v$ if $w$ does not contain $v$.  For example, $w=68435271$ avoids $v=35142$.

Hultman, Linusson, Shareshian, and Sj\"ostrand~\cite{HLSS} proved the following.
\begin{theorem}
\label{thm:hlss}
Let $w\in S_n$ be a permutation.  Then
\begin{enumerate}
\item $c(w)\leq s(w)$.
\item $c(w)=s(w)$ if and only if $w$ avoids 4231, 35142, 42513, and 351624.
\end{enumerate}
\end{theorem}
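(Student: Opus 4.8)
The plan is to turn both sides into explicit combinatorial counts and then compare them. Since the hyperplanes of $\mathcal{A}_w$ are precisely the $H_{i,j}$ with $\{i,j\}$ an inversion of $w$, the arrangement $\mathcal{A}_w$ is the graphical arrangement of the \emph{inversion graph} $G_w$ on the vertex set $\{1,\dots,n\}$ whose edges are the inversions of $w$. By Zaslavsky's theorem together with Stanley's evaluation of the chromatic polynomial of $G_w$ at $-1$, $c(w)$ equals the number of acyclic orientations of $G_w$, equivalently the number of no-broken-circuit sets of $G_w$; concretely the chambers of $\mathcal{A}_w$ correspond to the acyclic orientations of $G_w$, a chamber containing the braid cell of $u\in S_n$ exactly when $u$ induces that orientation. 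On the other side $s(w)=\#[\id,w]$ is governed by the tableau criterion. The theorem is then a comparison of these two counts.

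For part~(1) the goal is an injection from the no-broken-circuit sets of $G_w$ into $[\id,w]$; equivalently, to bound Zaslavsky's sum $c(w)=\sum_X|\mu_{L_w}(\hat{0},X)|$ over the intersection lattice $L_w$ of $\mathcal{A}_w$ by $s(w)$. One must resist the most tempting attempt: sending a chamber of $\mathcal{A}_w$ to a canonically chosen permutation whose braid cell it contains does \emph{not} work, because already for $w=231$ one chamber of $\mathcal{A}_w$ contains no element of $[\id,w]$ while another contains two of them. Constructing the correct map --- the technical core of \cite{HLSS} --- is where I expect most of the work.

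For the ``if'' direction of part~(2), I would use the Gasharov--Reiner theorem quoted in the introduction to replace ``$w$ avoids $4231$, $35142$, $42513$, $351624$'' by ``$w$ is defined by inclusions,'' with its finite defining sets $B(w)$ and $T(w)$. The target is that for such $w$ the correspondence above is \emph{balanced}: every chamber of $\mathcal{A}_w$ meets $[\id,w]$ in exactly one element, so $c(w)=s(w)$. The inclusion conditions should do exactly two jobs: single out, inside each chamber, a distinguished permutation that lies in $[\id,w]$, and forbid a chamber from containing two elements of $[\id,w]$ --- if $u,u'\le w$ induce the same orientation of $G_w$, then any position where they differ would violate a condition of $B(w)$ or $T(w)$ that both of them satisfy, forcing $u=u'$.

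For the ``only if'' direction I would argue by pattern lifting. The base cases are finite checks on the four minimal bad permutations: $c(4231)=18<20=s(4231)$, and similarly for $35142$, $42513\in S_5$ and $351624\in S_6$. The inductive step is a monotonicity statement: if $w$ contains one of these patterns then $c(w)<s(w)$. The delicacy here --- alongside part~(1), the main obstacle --- is that $s(w)-c(w)$ is a difference of a ``collision'' count (chambers meeting $[\id,w]$ more than once) and a ``miss'' count (chambers disjoint from $[\id,w]$), so it is not enough to inflate a pattern occurrence into a single colliding pair of elements of $[\id,w]$; one must show that the inflation tips this global balance, using that the inversions of $w$ among the positions of the pattern occurrence are exactly the inversions of the pattern. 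Finally, one could route the equivalence through Sj\"ostrand's right-hull criterion~\cite{Sjo} instead, or through Hultman's later reformulation~\cite{Hult} in terms of directed versus undirected distances in the Bruhat graph --- the latter being the formulation the present paper adapts from $S_n$ to $B_n$.
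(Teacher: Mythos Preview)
The paper does not prove Theorem~\ref{thm:hlss}; it is stated as a known result and attributed to Hultman, Linusson, Shareshian, and Sj\"ostrand~\cite{HLSS}, with no proof given in the present paper. So there is no ``paper's own proof'' to compare your proposal against.

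That said, a few remarks on your outline as a proof sketch. Your identification of $\mathcal{A}_w$ with the graphical arrangement of the inversion graph and the translation of $c(w)$ into acyclic orientations is correct and is indeed the starting point in~\cite{HLSS}. You are also right that the naive chamber-to-permutation map fails and that building the injection for part~(1) is the technical heart of~\cite{HLSS}; but you have not actually supplied that construction, so what you have written is a plan rather than a proof. For the ``if'' direction of~(2), your proposed mechanism (inclusion conditions force each chamber to meet $[\id,w]$ exactly once) is plausible in spirit but not how~\cite{HLSS} proceed, and your argument that two elements $u,u'\le w$ inducing the same orientation must coincide is not substantiated --- you would need to say precisely which inclusion condition is violated and why. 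The cleaner route, which you mention at the end, is via Hultman's distance criterion~\cite{Hult}: once one has Theorem~\ref{thm:hultman}, the ``if'' direction reduces to showing that the right hull condition implies $\ell_D(u,w)=\ell_T(u,w)$, and this is exactly the argument the present paper generalizes to $B_n$ in Proposition~\ref{prop:rh->dist}. For the ``only if'' direction, your pattern-lifting idea is morally correct, but again the actual monotonicity statement you need (that containing a bad pattern forces $c(w)<s(w)$) is the content, not the frame; compare Proposition~\ref{prop:pattern->distance} here, which carries out the analogous step via the distance criterion rather than via a direct collision/miss count.
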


\begin{example}
For $w=w_0$, the interval $[\id,w_0]$ is all of $S_n$, so $c(w)=s(w)=n!$.  For $w=3412$, we have $c(w)=s(w)=14$.  On the other hand, for $w=4231$, we have $c(w)=18$, but $s(w)=20$.
\end{example}

\subsection{Permutations defined by inclusions and the right hull property}

Given a permutation $w$, the {\bf diagram} for $w$, denoted $D(w)$, is defined as follows.  Draw a grid of $n\times n$ boxes, mark the entries of the permutation $w$ as in its permutation matrix, and cross out the boxes directly above and directly to the right of a permutation entry.  The remaining boxes are the diagram.  This means
$$D(w)=\{(p,q)\mid p>w(q), w^{-1}(p)>q\}.$$  Note that $(p,q)\in D(w)$ if and only if $(q,w^{-1}(p))$ is not an inversion.

The {\bf coessential set} $E(w)$ is the set of northeast-most boxes in a connected component of $D(w)$.  More precisely, $(p,q)\in E(w)$ if $(p,q)\in D(w)$, $(p-1,q)\not\in D(w)$, and $(p,q+1)\not\in D(w)$.  Alternatively,
$$E(w)=\{(p,q)\mid w^{-1}(p-1)\leq q< w^{-1}(p), w(q)<p\leq w(q+1)\}.$$
It is a lemma of Fulton~\cite{Ful92} that $u\leq w$ if $r_u(p,q)\leq r_w(p,q)$ for all $(p,q)\in E(w)$, and, furthermore, $E(w)$ is the unique minimal set that determines the Bruhat interval $[\id,w]$.  (To be precise, Fulton defined an {\bf essential set} using a NW rank function and proved a lemma about the essential set that is equivalent to the one stated here.)

\begin{example}
\label{ex:ess-819372564}
(This is copied from~\cite[Example 2.2]{UW}.)  Let $w=819372564$.  Then the diagram and coessential set of $w$ are as
in Figure~\ref{fig:ess-819372564}.  In particular, $$E(w)=\{(2,2),
(4,4), (4,6), (6,7), (9,2)\},$$ with $r_w(2,2)=1$, $r_w(4,4)=2$, $r_w(4,6)=3$, $r_w(6,7)=3$, and $r_w(9,2)=0$.

\begin{figure}[htbp]
\begin{tikzpicture}[scale=0.7]
\draw (0, 0) -- (9, 0) -- (9, 9) -- (0, 9) -- cycle;

\fill (0.5, 1.5) circle (4pt);
\draw[thick] (9, 1.5) -- (0.5, 1.5) -- (0.5, 9);
\fill (1.5, 8.5) circle (4pt);
\draw[thick] (9, 8.5) -- (1.5, 8.5) -- (1.5, 9);
\fill (2.5, 0.5) circle (4pt);
\draw[thick] (9, 0.5) -- (2.5, 0.5) -- (2.5, 9);
\fill (3.5, 6.5) circle (4pt);
\draw[thick] (9, 6.5) -- (3.5, 6.5) -- (3.5, 9);
\fill (4.5, 2.5) circle (4pt);
\draw[thick] (9, 2.5) -- (4.5, 2.5) -- (4.5, 9);
\fill (5.5, 7.5) circle (4pt);
\draw[thick] (9, 7.5) -- (5.5, 7.5) -- (5.5, 9);
\fill (6.5, 4.5) circle (4pt);
\draw[thick] (9, 4.5) -- (6.5, 4.5) -- (6.5, 9);
\fill (7.5, 3.5) circle (4pt);
\draw[thick] (9, 3.5) -- (7.5, 3.5) -- (7.5, 9);
\fill (8.5, 5.5) circle (4pt);
\draw[thick] (9, 5.5) -- (8.5, 5.5) -- (8.5, 9);

\draw (0, 1) -- (2, 1) -- (2, 0);
\draw (1, 1) -- (1, 0);
\draw (1, 2) -- (1, 8) -- (2, 8) -- (2, 2) -- cycle;
\draw (1, 3) -- (2, 3);
\draw (1, 4) -- (2, 4);
\draw (1, 5) -- (2, 5);
\draw (1, 6) -- (2, 6);
\draw (1, 7) -- (2, 7);
\draw (3, 2) -- (3, 6) -- (4, 6) -- (4, 2) -- cycle;
\draw (3, 3) -- (4, 3);
\draw (3, 4) -- (4, 4);
\draw (3, 5) -- (4, 5);
\draw (5, 3) -- (5, 6) -- (6, 6) -- (6, 4) -- (7, 4) -- (7, 3) -- cycle;
\draw (5, 4) -- (6, 4);
\draw (5, 5) -- (6, 5);
\draw (6, 4) -- (6, 3);

\node at (1.5, 0.5) {E};
\node at (1.5, 7.5) {E};
\node at (3.5, 5.5) {E};
\node at (5.5, 5.5) {E};
\node at (6.5, 3.5) {E};
\end{tikzpicture}
\caption{\label{fig:ess-819372564} Diagram and essential set for $w=819372564$.}
\end{figure}
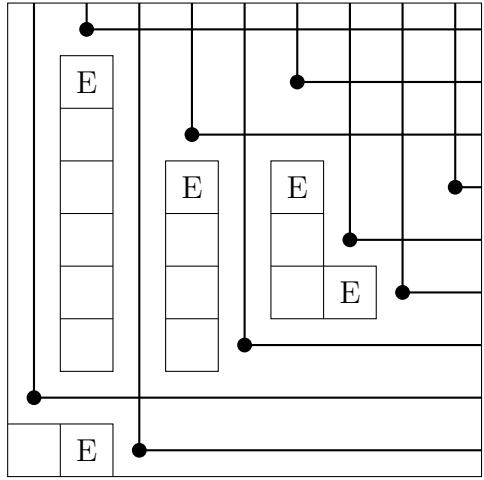
\end{example}

A permutation $w$ is {\bf defined by inclusions} if, for all $(p,q)\in E(w)$, we have $r_w(p,q)=\max(0,q-p+1)=r_\id(p,q)$.  Note that $r_w(p,q)\geq\max(0,q-p+1)$ for all $p,q$ for any permutation $w$, so being defined by inclusions means that the rank numbers are as small as possible for elements of the coessential set.  We use this terminology because $r_u(p,q)\leq 0$ if and only if $\{u(1),\ldots,u(q)\}\subseteq\{1,\ldots,p-1\}$ and $r_u(p,q)\leq q-p+1$ if and only if $\{1,\ldots, p-1\}\subseteq\{u(1),\ldots,u(q)\}$.  (The first statement is obvious from the definitions.  For the second, see Lemma~\ref{lem:NEedgeboxes}.)
Alternatively, $w$ is defined by inclusions if the Schubert variety $X_w$ is defined by conditions of the form $F_q\subseteq E_{p-1}$ (when $r_w(p,q)=0$) and the form $E_{p-1}\subseteq F_q$ (when $r_w(p,q)=q-p+1$).

Gasharov and Reiner~\cite{GR} showed that a permutation $w$ is defined by inclusions if and only if $w$ avoids 4231, 35142, 42513, and 351624.

\begin{example}
The permutation $w=819372564$ in Example~\ref{ex:ess-819372564}
is not defined by inclusions since $(4,4)\in E(w)$ and $r_w(4,4)=2\neq 4-4+1$.  We see that $w$ contains 4231 (in several ways).
\end{example}

Given a permutation $w$, define the {\bf right hull} of $w$ to be the set $H(w)$ of points $(i,j)$ satisfying {\it both} of the following:
\begin{itemize}
\item $1\leq i\leq w(k)$ for some $k\leq j$
\item $w(k)\leq i\leq n$ for some $k\geq j$.
\end{itemize}
We say that a permutation $v$ is in $H(w)$ and write $v\subseteq H(w)$ if $(v(j),j)\in H(w)$ for all $j$, $1\leq j\leq n$.  In other words, for this purpose,
we think of a permutation $v$ as the set of points $\{(v(j),j)\mid 1\leq j\leq n\}$.

Given any set $S\subseteq\mathbb{R}^2$ (which we index with matrix coordinates for the purposes of this paper), let the
{\bf rectilinear SW-NE hull} of $S$ be the set of points
$$H(S):=\{(p,q)\mid p_1\geq p\geq p_2, q_1\leq q\leq q_2 \text{ for some $(p_1, q_1)$ and $(p_2, q_2)$ in $S$}\}.$$
The right hull of $w$ can be thought of as the rectilinear SW-NE hull of $w$.

Note that, if $u\leq w$, then $u\subseteq H(w)$.  A permutation $w$ satisfies the {\bf right hull condition} if the converse is true,
meaning that, for all $u\subseteq H(w)$, we have $u\leq w$.

Sj\"ostrand~\cite{Sjo} showed that a permutation $w$ satisfies the right hull condition if and only if $w$ avoids 4231, 35142, 42513, and 351624.  He asked for an explanation of the connection between this result and the result of Gasharov and Reiner, which we
will provide in Section~\ref{sect:hull=dbi}.

\begin{example}
Let $w=819372564$ as in Example~\ref{ex:ess-819372564}.  The right hull $H(w)$ is the unshaded region in Figure~\ref{fig:rh-819372564}.  If $u=168523479$, then $u\in H(w)$, but $u\not\leq w$ since $r_u(4,4)=3$ but $r_w(4,4)=2$.  Hence $w$ does not satisfy the right hull condition.

\begin{figure}[htbp]
\begin{tikzpicture}[scale=0.7]
\draw (0, 0) -- (9, 0) -- (9, 9) -- (0, 9) -- cycle;

\fill (0.5, 1.5) circle (4pt);
\fill (1.5, 8.5) circle (4pt);
\fill (2.5, 0.5) circle (4pt);
\fill (3.5, 6.5) circle (4pt);
\fill (4.5, 2.5) circle (4pt);
\fill (5.5, 7.5) circle (4pt);
\fill (6.5, 4.5) circle (4pt);
\fill (7.5, 3.5) circle (4pt);
\fill (8.5, 5.5) circle (4pt);

\fill[color=gray] (0, 0) -- (0, 1) -- (2, 1) -- (2, 0) -- cycle;
\fill[color=gray] (2, 9) -- (9, 9) -- (9, 6) -- (6, 6) -- (6, 8) -- (2, 8) -- cycle;

\end{tikzpicture}

\caption{\label{fig:rh-819372564} Right hull for $w=819372564$.}
\end{figure}
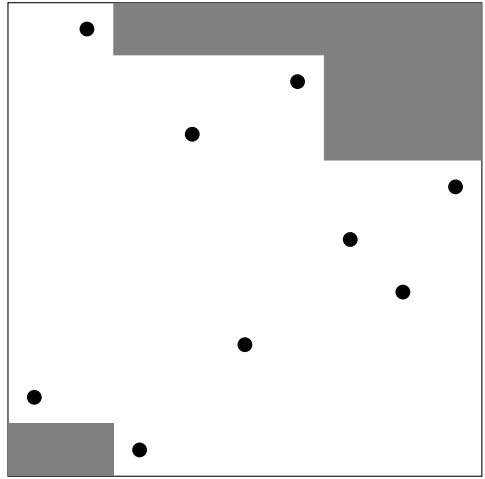
\end{example}

\subsection{Inversion arrangements for finite Coxeter groups}

Let $(W,S)$ be a {\bf Coxeter group}.  This is a group along with a distinguished set of generators $S$ such that the defining relations are $s^2=\id$ for all $s\in S$ and $(st)^{m(s,t)}=\id$ for all pairs $s,t\in S$, for some $m(s,t)\in\{2,3,\ldots\}\cup\{\infty\}$.  The symmetric group $S_n$ is a Coxeter group with generators $s_i=(i\,\,i+1)$.  In this case, we have $m(s_i,s_j)=2$ if $|j-i|\geq 2$ and $m(s_i,s_{i+1})=3$.  A {\bf Coxeter group isomorphism} $\phi: (W', S')\rightarrow (W, S)$ is a group isomorphism $\phi: W'\rightarrow W$ that induces a bijection between $S'$ and $S$.  For a general introduction to the combinatorics of Coxeter groups, see~\cite{BB}.

Given a Coxeter group $(W,S)$, a {\bf reflection} is a conjugate of an element of $S$; we denote the set of all reflections by $T$.  Given $w\in W$, the {\bf length} of $w$, denoted $\ell(w)$, is the minimum number of elements of $S$ whose product is $w$, and the {\bf absolute
length} of $w$, denoted $\ell_T(w)$, is the minimum number of elements of $T$ whose product is $w$.  {\bf Bruhat order} is the transitive closure of the covering relation defined by $v\prec w$ if $\ell(w)=\ell(v)+1$ and $w=vt$ for some $t\in T$.  As before, we let $s(w)$ denote the number of elements in the interval $[\id, w]$.  The {\bf inversion set} of $w$, denoted $\Inv(w)$, is the set
$$\Inv(w):=\{t\in T\mid wt<w\},$$ where $<$ denotes Bruhat order.  (Hultman~\cite{Hult} gives a definition of inversion equivalent to the condition that $tw<w$.  This forces him to multiply permutations backwards, which we wish to avoid.  None of the the results in this paper or in~\cite{Hult} see the difference between $w$ and $w^{-1}$, so no changes in the statements of results need to be made to account for this difference.)

A finite Coxeter group $(W,S)$ has a faithful action on $V(W)\cong\mathbb{R}^{|S|}$, known as the {\bf reflection representation}, in which elements of $S$ (and hence $T$) act as reflections.  Absolute length has the following interpretation due to Carter~\cite[Lemma 2]{Carter} in terms of the reflection representation.

\begin{lemma}
\label{lem:carter}
Let $w\in W$, and let $A\subseteq V(W)$ be the subspace consisting of all points fixed by $w$.  Then $\ell_T(w)=\codim A = |S|-\dim A$.
\end{lemma}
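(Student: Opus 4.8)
The plan is to prove the two inequalities $\ell_T(w)\ge\codim A$ and $\ell_T(w)\le\codim A$ separately.  The first is immediate: if $w=t_{\alpha_1}\cdots t_{\alpha_k}$ is a product of $k=\ell_T(w)$ reflections, where $t_{\alpha_i}$ reflects in the root $\alpha_i$, then each $t_{\alpha_i}$ fixes the hyperplane $\alpha_i^{\perp}$ pointwise, so $w$ fixes $\bigcap_i\alpha_i^{\perp}$ pointwise; hence $A\supseteq\bigcap_i\alpha_i^{\perp}$ and $\codim A\le\dim\operatorname{span}(\alpha_1,\dots,\alpha_k)\le k$.

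For the reverse inequality I would induct on $m:=\codim A$.  If $m=0$ then $w=\id$, since the reflection representation is faithful, and $\ell_T(w)=0$.  If $m\ge 1$, the goal is to produce a single reflection $t$ with $\codim\operatorname{Fix}(tw)=m-1$, for then the inductive hypothesis gives $\ell_T(tw)\le m-1$ and hence $\ell_T(w)\le 1+(m-1)=m$.  Since $w$ preserves the invariant inner product on $V:=V(W)$, there is an orthogonal decomposition $V=A\oplus B$ with $B:=\operatorname{im}(w-\id)=A^{\perp}$, and $B$ is $w$-invariant with $(w-\id)|_{B}$ invertible.  Writing $w$ as a product of simple reflections and telescoping, $w-\id=\sum_{j} t_{i_1}\cdots t_{i_{j-1}}(t_{i_j}-\id)$, each summand has image contained in a line spanned by a root, so $B$ is spanned by roots; since $m\ge 1$ we have $B\ne 0$, so $B$ contains a root $\alpha$.  (Alternatively one could invoke Steinberg's theorem, which identifies the pointwise stabilizer of $A$ with the reflection subgroup generated by the $t_{\beta}$ with $\beta\in B$; working inside this subgroup only overestimates $\ell_T$ in $W$, which is all the induction needs.)

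The key computation is that the symmetric part of $(w-\id)^{-1}$ on $B$ equals $-\tfrac12\id_{B}$.  Indeed, since the restriction of $w$ to $B$ is orthogonal, $\bigl((w-\id)^{-1}\bigr)^{\top}=\bigl((w-\id)^{\top}\bigr)^{-1}=(w^{-1}-\id)^{-1}=-(w-\id)^{-1}w$, and adding gives $(w-\id)^{-1}+\bigl((w-\id)^{-1}\bigr)^{\top}=(w-\id)^{-1}(\id-w)=-\id_{B}$.  Hence the vector $v:=(w-\id)^{-1}\alpha\in B$ satisfies $\langle v,\alpha\rangle=-\tfrac12\langle\alpha,\alpha\rangle$, so $t_{\alpha}v=v+\alpha=v+(w-\id)v=wv$, and therefore $t_{\alpha}wv=t_{\alpha}(v+\alpha)=t_{\alpha}v-\alpha=v$.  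As $\alpha\ne 0$ we get $v\ne 0$; and since $\alpha\in B=A^{\perp}$, both $t_{\alpha}$ and $w$ fix $A$ pointwise, so $t_{\alpha}w$ fixes $A\oplus\mathbb{R}v$ pointwise.  Because $v\in B\setminus\{0\}$, we have $v\notin A$, so $\dim\operatorname{Fix}(t_{\alpha}w)\ge\dim A+1$, i.e.\ $\codim\operatorname{Fix}(t_{\alpha}w)\le m-1$ (it is also $\ge m-1$, though only the upper bound is needed), completing the induction.

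The hard part is the inductive step, and within it the crucial point is finding a reflection whose left multiplication strictly decreases the codimension of the fixed space.  The identity on the symmetric part of $(w-\id)^{-1}$ is exactly what makes \emph{every} root of the moved space $B$ work for this purpose; the only remaining wrinkle is the easy fact that $B$ contains a root at all.
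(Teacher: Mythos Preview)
The paper does not prove this lemma; it simply attributes it to Carter~\cite[Lemma 2]{Carter} and states it without argument.  Your proof is therefore not competing with anything in the paper, and it is essentially correct and in the spirit of Carter's original argument: prove $\ell_T(w)\ge\codim A$ trivially, then induct on $\codim A$ by peeling off one reflection that enlarges the fixed space.  The computation with the symmetric part of $(w-\id)^{-1}$ is a clean way to see that \emph{any} root in $B=\operatorname{im}(w-\id)$ works, which is a pleasant sharpening.

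One small wrinkle: your telescoping argument shows only that $B\subseteq\operatorname{span}(\beta_1,\dots,\beta_\ell)$ for certain roots $\beta_j$, not that any $\beta_j$ actually lies in $B$, so it does not by itself yield a root in $B$.  Your parenthetical alternative via Steinberg's theorem does the job: the pointwise stabilizer of $A$ is generated by the reflections $t_\beta$ with $\beta\in B$, and since it contains $w\ne\id$ it is nontrivial, so such a $\beta$ exists.  (Equivalently, if no root were orthogonal to $A$ then $A$ would meet the interior of a chamber, forcing $w=\id$.)  With that patch in place the argument is complete.
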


We will need the following simple corollary of this lemma.

\begin{corollary}
\label{cor:carter}
Let $w\in W$, with $w=t_1\cdots t_k$ for some $t_1,\ldots, t_k\in T$.  Suppose there exists a vector $\mathbf{v}$ and an index $i$ such
that $\mathbf{v}$ is fixed by $w$ but not by $t_i$.  Then $\ell_T(w)<k$.
\end{corollary}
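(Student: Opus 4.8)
The plan is to reduce Corollary~\ref{cor:carter} directly to Lemma~\ref{lem:carter} by producing enough linearly independent information about the fixed space of $w$. Concretely, write $w = t_1 \cdots t_k$ and consider the product $w' := t_1 \cdots \widehat{t_i} \cdots t_k$ obtained by deleting the $i$-th factor. This is a product of $k-1$ reflections, so by the triangle inequality for absolute length (or directly by Lemma~\ref{lem:carter}) we have $\ell_T(w') \leq k-1$. It therefore suffices to relate $w$ and $w'$: I would show that the fixed space of $w'$ is contained in the fixed space of $w$, or more precisely that $\ell_T(w) \leq \ell_T(w') < k$ unless some degenerate coincidence forces equality. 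The cleanest route avoids $w'$ altogether, however.

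First I would argue purely with Lemma~\ref{lem:carter}. Let $A = \{ \mathbf{x} \in V(W) : w\mathbf{x} = \mathbf{x}\}$ be the fixed space of $w$, so $\ell_T(w) = |S| - \dim A$ by Carter's lemma. By hypothesis $\mathbf{v} \in A$ but $t_i \mathbf{v} \neq \mathbf{v}$. The key observation is that a reflection $t_i$ fixes a hyperplane $H_i$ (its mirror), and $t_i \mathbf{v} \neq \mathbf{v}$ exactly means $\mathbf{v} \notin H_i$. Now each $t_j$ for $j \neq i$ acts as a reflection, hence as an invertible linear map, and I want to locate a fixed vector of $w' = t_1 \cdots \widehat{t_i} \cdots t_k$ and compare. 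The slicker argument: since $\mathbf{v} \in A$ and $w = t_1 \cdots t_k$, apply $w$ to $\mathbf{v}$ and read the equation $t_1 \cdots t_k \mathbf{v} = \mathbf{v}$ from the right; because $t_i \mathbf{v} \ne \mathbf{v}$, the vector $t_i \mathbf{v}$ differs from $\mathbf{v}$ only by a multiple of the root $\alpha_i$ defining $H_i$, and chasing this through shows that $w'$ and $w$ have fixed spaces of dimension differing appropriately. I expect the cleanest formulation is: the subgroup $\langle t_1, \ldots, t_k\rangle$ fixes the subspace $A' := \bigcap_j \ker(t_j - \id)$ pointwise, $w$ fixes $A'$ as well, and $\dim A \geq \dim A' \geq |S| - k$; the strict inequality $\ell_T(w) < k$ then follows once we show $\dim A > |S| - k$, which is where the hypothesis on $\mathbf{v}$ and $t_i$ enters — $\mathbf{v}$ contributes a fixed direction of $w$ that is "not used up" by the $t_i$-slot.

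The main obstacle I anticipate is making the dimension-counting rigorous without circular reasoning: it is tempting to say "$w$ fixes a $(|S|-k+1)$-dimensional space because one of the $k$ reflections is redundant," but the $t_j$ need not commute and their mirrors need not be in general position, so the fixed space of the product is not simply the intersection of the individual mirrors. I would handle this by the following clean device: consider the $(k-1)$-fold product $w' = t_1\cdots \widehat{t_i}\cdots t_k$; then $w = (t_1\cdots t_{i-1}) \, t_i \, (t_{i+1}\cdots t_k)$, and one checks that $\mathbf{v}$ being fixed by $w$ while $t_i\mathbf{v}\ne\mathbf{v}$ implies $t_i = (t_{i-1}\cdots t_1)\,w\,(t_k\cdots t_{i+1})$ restricted appropriately — better yet, just set $u := t_{i+1}\cdots t_k$ and $u' := t_{i-1}\cdots t_1$, so $w = (u')^{-1} t_i u$ up to reindexing, giving $w = u' t_i u$ with $\ell_T(u')+\ell_T(u)\le k-1$, hence $\ell_T(w)\le k$; to get strict inequality, observe $w u^{-1}(u')^{-1}$-conjugation transports the non-fixed vector, forcing the fixed space of $w$ to meet the mirror $H_i$ in dimension one larger than a naive count, i.e. $\dim A \ge |S|-k+1$. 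Then Lemma~\ref{lem:carter} gives $\ell_T(w) = |S| - \dim A \le k-1 < k$, completing the proof.
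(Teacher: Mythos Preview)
Your proposal buries the correct idea under several false starts and never quite states the one-line argument that finishes it. In your third paragraph you already write down everything needed: with $A$ the fixed space of $w$ and $A' := \bigcap_{j=1}^k H_{t_j}$, you observe $A \supseteq A'$ and $\dim A' \geq |S| - k$. The hypothesis says $\mathbf{v} \in A$ but $\mathbf{v} \notin H_{t_i} \supseteq A'$, so $\mathbf{v} \in A \setminus A'$. That single sentence gives $A \supsetneq A'$, hence $\dim A > \dim A' \geq |S| - k$, hence $\ell_T(w) = |S| - \dim A \leq k-1$ by Lemma~\ref{lem:carter}. You gesture at this with the phrase ``$\mathbf{v}$ contributes a fixed direction of $w$ that is not used up by the $t_i$-slot,'' but then, instead of writing down the strict containment, you declare the dimension count an obstacle and launch into a conjugation argument that goes nowhere (and contains an error: you write $w = (u')^{-1} t_i u$ and then ``giving $w = u' t_i u$'', which is not the same thing).

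For comparison, the paper's proof is the contrapositive of exactly this: assume $\ell_T(w) = k$, so $\codim A = k$; since $A \supseteq A'$ and $\codim A' \leq k$, equality $A = A'$ is forced, whence every $w$-fixed vector lies in each $H_{t_i}$. This is the same argument read backwards. Either direction is two lines; the detours through $w' = t_1\cdots\widehat{t_i}\cdots t_k$ and conjugation are unnecessary and, as you suspected, do not lead to a clean bound without returning to the fixed-space comparison anyway.
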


\begin{proof}
We prove the contrapositive.  Suppose that $k=\ell_T(w)$, let $A$ be the subspace fixed by $w$, and let $H_{t_i}$ be the hyperplane fixed by $t_i$.  Note $A \supseteq \bigcap_{i=1}^k H_{t_i}$.  However, $\codim A=k$, so $A=\bigcap_{i=1}^k H_{t_i}$, which means that every vector fixed by $w$ must be fixed by every $t_i$.
\end{proof}

Given $w\in W$, define the inversion arrangement
$$\mathcal{A}_w:= \{ H_t\subseteq V(W)\mid t\in \Inv(w)\},$$ 
where
$$H_t:=\{\mathbf{v}\in V(W)\mid t\cdot\mathbf{v} = \mathbf{v}\}$$ is the hyperplane fixed by $t$.  As before, let $c(w)$ denote the number of chambers of $\mathcal{A}_w$.  Hultman~\cite{Hult} extended Theorem~\ref{thm:hlss} to an arbitrary finite Coxeter group using a condition on the {\bf Bruhat graph} $\mathcal{B}(W)$ of $W$.  This is the directed graph whose vertices correspond to the elements of $W$, with an edge from $u$ to $v$ if there exists $t\in T$ with $v=ut$ and $\ell(v)>\ell(u)$.  (Note that $v$ is not necessarily a cover of $u$ in Bruhat order, since the difference in length may be greater than 1.)

There are two possible notions of distance in $\mathcal{B}(W)$.  The {\bf directed distance} $\ell_D(u,w)$ is the length of a shortest directed path from $u$ to $w$ in $\mathcal{B}(W)$, while the {\bf undirected distance} $\ell_T(u,w)$ is the length of a shortest path from $u$ to $w$ ignoring the directions on the edges.  Note that $\ell_T(u,w)\leq\ell_D(u,w)$ by definition, and a theorem of Dyer~\cite{Dyer} states that $\ell_T(w)=\ell_T(\id,w)=\ell_D(\id,w)$ for all $w$.  Furthermore, $\ell_T(u,w)=\ell_T(w^{-1}u,\id)=\ell_T(w^{-1}u)$.  Hultman's theorem~\cite{Hult} is the following.

\begin{theorem}
\label{thm:hultman}
Let $(W,S)$ be a finite Coxeter group, and let $w\in W$.  Then
\begin{enumerate}
\item $c(w)\leq s(w)$.
\item $c(w)=s(w)$ if and only if, for all $u\leq w$, $\ell_D(u,w)=\ell_T(u,w)$.
\end{enumerate}
\end{theorem}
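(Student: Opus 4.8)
The plan is to follow Hultman~\cite{Hult}, adapting to an arbitrary finite Coxeter group the argument by which Hultman, Linusson, Shareshian, and Sj\"ostrand~\cite{HLSS} proved Theorem~\ref{thm:hlss}; the explicit permutation combinatorics used in $S_n$ is replaced by Carter's Lemma~\ref{lem:carter}. For part~(1), I would view $\mathcal{A}_w$ as a subarrangement of the full reflection arrangement $\mathcal{A}_W=\{H_t\mid t\in T\}$, whose chambers are in bijection with $W$ via $v\mapsto C_v:=v\cdot C_0$ for $C_0$ the fundamental chamber; each chamber $R$ of $\mathcal{A}_w$ is then a union of Coxeter chambers $C_v$. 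The goal is to show that
$$\Phi\colon[\id,w]\longrightarrow\{\text{chambers of }\mathcal{A}_w\},\qquad u\longmapsto\text{the chamber containing }C_u,$$
is surjective, which immediately gives $c(w)=\#\{\text{chambers of }\mathcal{A}_w\}\le\#[\id,w]=s(w)$. Surjectivity reduces to the key lemma: \emph{if $v$ is Bruhat-minimal among all $x\in W$ with $C_x\subseteq R$, then $v\le w$.} Here minimality of $v$ forces each ``lower wall'' of $C_v$ --- every reflection $t$ with $\ell(vt)<\ell(v)$ whose hyperplane bounds $C_v$ --- to lie in $\mathcal{A}_w$; from there one would ascend from $v$ toward $w$, one inversion hyperplane at a time, using Corollary~\ref{cor:carter} at each step to certify that the reflections used still form a minimal factorization, and conclude $v\le w$. (As a check, $C_w$ itself has all of its lower walls in $\mathcal{A}_w$ by the very definition of $\Inv(w)$, so $w$ automatically lies in the image of $\Phi$.)

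Since $\Phi$ is surjective and each chamber $R$ already contains the Coxeter chamber of the element $m(R)\le w$ just produced --- which is, moreover, Bruhat-minimal among \emph{all} $x$ with $C_x\subseteq R$ --- the map $\Phi$ is injective if and only if every $u\le w$ is the Bruhat-minimum among all $x\in W$ whose Coxeter chamber lies in the same chamber of $\mathcal{A}_w$ as $C_u$; and $c(w)=s(w)$ exactly when this holds. So part~(2) comes down to proving that this chamber-minimality condition holds for all $u\le w$ precisely when $\ell_D(u,w)=\ell_T(u,w)$ for all $u\le w$.

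The bridge is the dictionary between directed geodesics in $\mathcal{B}(W)$ and reflection factorizations: a directed path $u\to ut_1\to\cdots\to ut_1\cdots t_k=w$ is exactly a factorization $u^{-1}w=t_1\cdots t_k$ into reflections with $\ell(ut_1\cdots t_i)$ strictly increasing in $i$, so $\ell_D(u,w)$ is the least length of such an ``increasing'' factorization, whereas by Carter's Lemma~\ref{lem:carter} the undirected distance $\ell_T(u,w)=\ell_T(u^{-1}w)=\codim\mathrm{Fix}(u^{-1}w)$ is the least length over \emph{all} factorizations. Hence $\ell_D(u,w)>\ell_T(u,w)$ means that no minimal reflection factorization of $u^{-1}w$ can be run as a length-decreasing walk from $w$ back down to $u$: any such walk gets stuck at some $v'\le w$ where the next prescribed reflection would \emph{increase} length. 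Corollary~\ref{cor:carter} keeps the factorization minimal throughout the walk, which forces the offending hyperplane not to be an inversion hyperplane of $w$; from this one reads off two distinct elements of $W$, both $\le w$ by the part~(1) lemma, whose Coxeter chambers lie in a single chamber of $\mathcal{A}_w$ --- so $\Phi$ is not injective. Conversely, if some $u\le w$ is not the Bruhat-minimum of its $\mathcal{A}_w$-chamber, pick $v<u$ with $C_v$ in that same chamber; the hyperplanes separating $C_v$ from $C_u$ then all avoid $\mathcal{A}_w$, and combining the passage from $u$ down to $v$ with an increasing geodesic from $u$ up to $w$ yields a reflection factorization of $v^{-1}w$ that Lemma~\ref{lem:carter} certifies to be of minimal length but which, read as a walk from $w$, is stuck --- producing a witness $u'\le w$ with $\ell_D(u',w)>\ell_T(u',w)$.

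The scaffolding --- the surjection $\Phi$, the reduction of (2) to injectivity, and the input from Carter's lemma --- will be routine, and part~(1) itself is immediate once $\Phi$ is understood. The real work will be in two places. First, the part~(1) lemma that a chamber-minimal $v$ satisfies $v\le w$: one must show that the purely local condition ``all lower walls of $C_v$ are inversion hyperplanes of $w$'' is strong enough to climb up to $w$ within Bruhat order, which is where most of the exchange-condition bookkeeping lives. Second, in part~(2), reconciling the \emph{local} obstruction ``$C_u$ has an adjacent strictly smaller Coxeter chamber not cut off from it by $\mathcal{A}_w$'' with the \emph{global} obstruction ``$u$ is not the Bruhat-minimum of its $\mathcal{A}_w$-chamber'', and matching the ``stuck walk'' with additivity of absolute length along the pertinent reflection factorizations; Corollary~\ref{cor:carter}, used to certify minimality of those factorizations, is the workhorse there.
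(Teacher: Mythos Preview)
The paper does not prove Theorem~\ref{thm:hultman}; it is quoted from Hultman~\cite{Hult} and used as a black box (the paper's own contribution is the equivalence of Conditions~\ref{cond:distance}--\ref{cond:patterns} in the Main Theorem). So there is no proof in this paper to compare your proposal against.

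That said, your sketch is broadly in the spirit of Hultman's argument: the surjection $\Phi$ from $[\id,w]$ onto chambers of $\mathcal{A}_w$ gives part~(1), and part~(2) is the analysis of when $\Phi$ is a bijection. You have correctly identified where the substance lies. One caution on the key lemma for part~(1): the local statement ``all lower walls of $C_v$ lie in $\mathcal{A}_w$'' does \emph{not} by itself imply $v\le w$ in a general Coxeter group, and your plan to ``ascend from $v$ toward $w$ one inversion hyperplane at a time using Corollary~\ref{cor:carter}'' does not make clear which reflections you multiply by or why each step stays below $w$; Corollary~\ref{cor:carter} only tells you about minimality of a factorization, not about Bruhat comparisons. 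Hultman's actual route goes through a formula (originating in~\cite{HLSS}) expressing $c(w)$ as the number of $u\le w$ satisfying a certain absolute-length condition, rather than through a direct chamber-by-chamber ascent. If you want to carry out your version you will need an independent argument --- e.g.\ via convexity/biclosedness of the inversion set of $w$ --- for why chamber-minimality forces $v\le w$. Also watch the left/right inversion convention: the paper takes $\Inv(w)=\{t:wt<w\}$, so the hyperplanes of $\mathcal{A}_w$ separate $C_{\id}$ from $C_{w^{-1}}$, not from $C_w$; your identification of chambers with cosets needs to be adjusted accordingly (the paper flags this discrepancy with Hultman's conventions just before Theorem~\ref{thm:hultman}).
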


Hultman~\cite{Hult} goes on to give a much shorter and more conceptual proof of the difficult direction of Theorem~\ref{thm:hlss}(2) (that the right hull condition implies $c(w)=s(w)$) using Theorem~\ref{thm:hultman}(2).  Our goal is to find the analogues of the right hull condition and being defined by inclusions for $B_n$ as well as a pattern avoidance criterion for the elements where equality holds.  We will follow in outline the proof of Hultman.

\begin{example}
Let $w=4231\in S_4$, and let $u=1324$.  Note that $\ell_T(u,w)=2$, since both $u$ and $w$ are adjacent to the identity in $\mathcal{B}(S_4)$.  On the other hand, if $u<ut\leq w$, then $ut\in\{3124, 2314, 1423, 1342\}$, and no permutation in this set is adjacent to $w$ in $\mathcal{B}(S_4)$, so $\ell_D(u,w)>2$.
\end{example}

\subsection{Billey--Postnikov avoidance}

Billey and Postnikov~\cite{BP} introduced a generalization of pattern avoidance particularly well suited to algebraic combinatorics on Coxeter groups.  Let $(W,S)$ be a finite Coxeter group.  A {\bf parabolic subgroup} is a subgroup $P\subseteq W$ that consists of all
the elements fixing pointwise some subspace $A\subseteq V(W)$.  In other words, $P$ is parabolic if there exists $A\subseteq V(W)$, where $V(W)$ is the reflection representation, such that $$P=\{w\in W\mid w\cdot \mathbf{v}=\mathbf{v}\text{ for all $\mathbf{v}\in A$}\}.$$

Let $\mathcal{B}(P)$ be the subgraph of the Bruhat graph $\mathcal{B}(W)$ induced by the vertices in $P$.  Let $R\subseteq P$ be the elements corresponding
to vertices with exactly one incoming edge in $\mathcal{B}(P)$.  Then $(P, R)$ is a Coxeter group, and
we can define $\ell_P$ and $\leq_P$ as length and Bruhat order in $P$ (with respect to $R$).  Note that $\ell_P(v)\leq \ell_W(v)$
for all $v\in V$ and, if $v\leq_P v'$, then $v\leq_W v'$.  Since $v\leq_P v'$ implies $v\leq_W v'$, the Bruhat graph of $P$ (with respect to $R$) is $\mathcal{B}(P)$ as the edges are correctly directed.  Also, given $w\in P$, by Lemma~\ref{lem:carter}, $\ell_T(w)$ is the same whether we consider $w$ as an element of $P$ or of $W$ since,
when considering $w$ as an element of $P$, both $|S|$ and the codimension of the subspace fixed by $w$ are decreased by $\dim A$.  (The reader is cautioned that we have given a rather unusual definition of parabolic subgroups that has the drawback of applying only to the finite case.  It can be seen to be equivalent to the usual definition by, for example,~\cite[Section 5.2]{Kane}.  We have taken more care than usual in selecting $R$ as we need the positive roots of $P$ to be the positive roots of $W$ that lie in $A$.)  

\begin{example}
Consider $S_4$ acting on $\mathbb{C}^4$ by permuting the coordinates.  (Strictly speaking, the reflection representation of $S_4$ is the quotient of $\mathbb{C}^4$ by the line where $x_1=x_2=x_3=x_4$, but this is irrelevant for us.)  Consider the subspace $A$ consisting of points such that $x_1=x_2=x_4$.  This is fixed by the parabolic subgroup $$P=\{1234, 2134, 1432, 2431, 4132, 4231\}$$ of elements $w$ such that $w(3)=3$, and $R=\{2134, 1432\}$.
\end{example}

Billey and Postnikov~\cite{BP} define a {\bf flattening map} $\fl^W_P: W\rightarrow P$ as follows.  Given $w\in W$, let $\Inv(w)$ be
the set of inversions of $w$.  Then there is a unique element of $P$ whose inversions are $\Inv(w)\cap P$.  We let
$\fl^W_P(w)$ be this element.  Billey and Braden~\cite[Theorem 2]{BilBra} show that the flattening map satisfies the following.  (To be precise, they state the theorem with multiplication backwards from what we have written here.)

\begin{theorem}
\label{thm:flattening}
Let $W$ be a Coxeter group, $P\subseteq W$ a parabolic subgroup, and $\fl^W_P: W\rightarrow P$ the flattening map.  Then
\begin{enumerate}
\item The map $\fl^W_P$ is $P$-equivariant, meaning that $\fl^W_P(wv)=\fl^W_P(w)v$ for all $v\in P$, $w\in W$.
\item If $\fl^W_P(w)\leq_P \fl^W_P(wv)$ for some $w\in W$, $v\in P$, then $w\leq_W wv$.
\end{enumerate}
\end{theorem}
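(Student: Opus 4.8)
The plan is to reduce both assertions to the standard theory of minimal-length coset representatives. Write $\Phi^+$ for the positive roots of $W$ (in bijection with $T$ via $\alpha\mapsto t_\alpha$), write $\Phi_P\subseteq\Phi$ for the roots of the reflections lying in $P$, and write $W^P$ for the set of minimal-length representatives of the cosets $wP$, so that every $w\in W$ factors uniquely as $w=w^Pw_P$ with $w^P\in W^P$ and $w_P\in P$. We will use two standard facts, phrased in terms of the paper's chosen generating set $R$ of $P$: first, $\ell(w^Px)=\ell(w^P)+\ell_P(x)$ for every $x\in P$; second, the paper's choice of $R$ makes the positive roots of $(P,R)$ equal to $\Phi^+\cap\Phi_P$, and $w^P$ sends this set into $\Phi^+$. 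The first step is to identify the flattening map with the parabolic projection: $\fl^W_P(w)=w_P$. Indeed, $\Inv(w)\cap P$ consists of the reflections $t_\alpha$ with $\alpha\in\Phi^+\cap\Phi_P$ and $w\alpha<0$; but for such $\alpha$ we have $w\alpha=w^P(w_P\alpha)$, and since $w_P$ permutes $\Phi_P$ while $w^P$ preserves positivity on $\Phi^+\cap\Phi_P$, we get $w\alpha<0$ if and only if $w_P\alpha<0$. Thus $\Inv(w)\cap P$ is exactly the inversion set of $w_P$ computed inside $P$, and by the uniqueness clause in the definition of $\fl^W_P$ we conclude $\fl^W_P(w)=w_P$.

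Part (1) now follows at once: for $v\in P$ the element $wv$ lies in the coset $w^PP$, so $w^P$ is again its minimal representative and $wv=w^P(w_Pv)$ is its parabolic factorization; hence $\fl^W_P(wv)=(wv)_P=w_Pv=\fl^W_P(w)\,v$.

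For part (2), combining the hypothesis $\fl^W_P(w)\le_P\fl^W_P(wv)$ with part (1) gives $w_P\le_P w_Pv$. It therefore suffices to show that left multiplication by $w^P$ sends a saturated chain in Bruhat order on $P$ from $w_P$ to $w_Pv$ to a chain in $W$ from $w$ to $wv$; for this it is enough that left multiplication by $w^P$ carries Bruhat covers in $P$ to Bruhat covers in $W$. So suppose $p\prec p'$ in $P$; then $p'=pt$ for some reflection $t$ of $P$, which is also a reflection of $W$, with $\ell_P(p')=\ell_P(p)+1$. By the additive-length fact, $\ell(w^Pp')=\ell(w^P)+\ell_P(p')=\ell(w^Pp)+1$, and since $w^Pp'=(w^Pp)t$ we conclude $w^Pp\prec w^Pp'$ in $W$. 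Iterating along the chain yields $w=w^Pw_P\le_W w^P(w_Pv)=wv$.

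The real work is hidden in the two standard facts invoked above, which need care in the paper's setup: that $(P,R)$ is a Coxeter system whose positive roots are $\Phi^+\cap\Phi_P$, and that, with this choice, minimal coset representatives retain both the additive-length property and the property of sending $\Phi^+\cap\Phi_P$ into $\Phi^+$. I expect this to be the main obstacle: note that $\ell_W$ need not restrict to $\ell_P$ on $P$, so the additive-length statement genuinely involves $\ell_P$, not $\ell_W$, and this is precisely the phenomenon that forces the careful selection of $R$ described in the text. Once these facts are established, everything else is formal bookkeeping. An alternative route, closer to the proof of Billey and Braden, is to establish part (1) directly from the twisted cocycle identity for right inversion sets by intersecting it with $\Phi_P$ and using that $v\in P$ permutes $\Phi_P$; this sidesteps coset representatives at the cost of more delicate tracking of root signs, and part (2) then follows exactly as above.
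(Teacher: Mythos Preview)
The paper does not prove Theorem~\ref{thm:flattening}; it is quoted as a result of Billey and Braden~\cite{BilBra}, so there is no proof in the paper to compare against. Assessing your argument on its own merits: the identification $\fl^W_P(w)=w_P$ via the root computation is correct, part~(1) follows as you say, and the second of your two ``standard facts''---that the minimal coset representative $w^P$ sends $\Phi^+\cap\Phi_P$ into $\Phi^+$---is also correct and is in fact all you need.

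The gap is that your first ``standard fact,'' the additive length formula $\ell(w^Px)=\ell(w^P)+\ell_P(x)$, is \emph{false} for parabolic subgroups that are not standard. Take $W=S_3$ and let $P$ be the pointwise stabilizer of the line $x_1=x_3$, so $P=\{e,(1\,\,3)\}$ with $R=\{(1\,\,3)\}$. For the identity coset one has $w^P=e$, and $\ell(e\cdot(1\,\,3))=\ell((1\,\,3))=3$, whereas $\ell(e)+\ell_P((1\,\,3))=0+1=1$. You correctly flag this step as the likely obstacle, but then assume the formula holds; it does not, and so your argument that covers in $P$ go to covers in $W$ breaks.

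The repair is to drop length and reuse the root argument from your identification step. You do not need covers to go to covers; it suffices that an edge $p<pt$ in the Bruhat graph of $P$ (with $t=t_\alpha\in T\cap P$) becomes an edge $w^Pp<w^Ppt$ in the Bruhat graph of $W$. Now $p<pt$ means $p\alpha>0$; since $\alpha\in\Phi_P$ and $p\in P$ we have $p\alpha\in\Phi^+\cap\Phi_P$; and since $w^P$ sends $\Phi^+\cap\Phi_P$ into $\Phi^+$, we get $(w^Pp)\alpha>0$, i.e.\ $w^Pp<(w^Pp)t$. Iterating along a saturated chain in $P$ from $w_P$ to $w_Pv$ yields a (not necessarily saturated) chain in $W$ from $w$ to $wv$, which is exactly what part~(2) requires.
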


Now let $(W', S')$ and $(W,S)$ be arbitrary Coxeter groups, $v\in W'$, and $w\in W$.  We say that $w$ {\bf BP contains} $v$
if there exist a parabolic subgroup $P\subseteq W$ and an isomorphism $\phi: (W',S') \rightarrow (P,R)$ such that $\fl^W_P(w)=\phi(v)$.  Otherwise, $w$ {\bf BP avoids} $v$.

When $W=S_n$ and $W'=S_m$, the parabolic subgroups $P\subseteq W$ isomorphic to $W'$ can all be constructed by selecting some indices $1\leq i_1<\cdots<i_m\leq n$ and letting $P$ be the subgroup
$$P=\{w\in S_n\mid w(j)=j \mbox{ for all } j\not\in\{i_1,\ldots i_m\}\}.$$  In this case,
$$R=\{(i_j\,\, i_{j+1})\mid 1\leq j\leq m-1\}.$$

BP avoidance is almost the same as pattern avoidance for $W=S_n$.  If we take the isomorphism $\phi: (S_m, S) \rightarrow (P, R)$ given by $\phi(s_j)=(i_j\,\, i_{j+1})$, then $v:=\phi^{-1}(\fl^{S_m}_P(w))$ is the permutation such that $v(j)>v(k)$ if and only if $w(i_j)>w(i_k)$, so $w$ pattern contains $v$ as usual.  However, there is another Coxeter group isomorphism $\phi': (S_m, S) \rightarrow (P, R)$ given by $\phi'(s_j)=(i_{m-j}\,\, i_{m-j+1})$ (reversing the Dynkin diagram).  Since $\phi'(v)=\phi(w_0vw_0)$, where $w_0\in S_m$ is the element such that $w_0(i)=m+1-i$, we see that $w$ BP contains $v$ if and only if $w$ pattern contains either $v$ or $w_0vw_0$.

Note that the converse of Theorem~\ref{thm:flattening} holds for $W=S_n$ and $W'=S_m$, as can easily be seen from the tableau criterion.

\begin{proposition}
\label{prop:unflattenbruhat}
Let $w\in S_n$, $v\in S_m$, $Q\subseteq S_n$ the subgroup of permutations which fix every entry other than $i_1,\ldots, i_m$, and $\phi: S_m\rightarrow Q$ the order-preserving isomorphism.  If $w\leq_{S_n} w\phi(v)$, then $\fl^{S_n}_{Q}(w)\leq_{Q} \fl^{S_n}_{Q}(w)\phi(v)$.
\end{proposition}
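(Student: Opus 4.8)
The plan is to derive the conclusion directly from the tableau criterion, transporting the single inequality we are handed at the level of $S_n$ into all the inequalities we need at the level of $Q\cong S_m$. Write $y_1<\cdots<y_m$ for the values $w(i_1),\dots,w(i_m)$ listed in increasing order, and let $\sigma\in S_m$ be the permutation determined by $w(i_j)=y_{\sigma(j)}$; this $\sigma$ is exactly the one-line pattern of $w$ at the chosen positions, and unwinding the definition of the flattening map (its inversions are the inversions of $w$ among the pairs $(i_j,i_k)$) gives $\fl^{S_n}_{Q}(w)=\phi(\sigma)$. Since $\phi$ is a Coxeter-group isomorphism it is in particular a group homomorphism that preserves Bruhat order, so $\fl^{S_n}_{Q}(w)\phi(v)=\phi(\sigma v)$, and it suffices to prove $\sigma\leq_{S_m}\sigma v$, that is, by the tableau criterion, that $r_\sigma(a,b)\leq r_{\sigma v}(a,b)$ for all $1\leq a,b\leq m$.

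I would then fix $a$ and $b$ and apply the hypothesis $w\leq_{S_n} w\phi(v)$ through the tableau criterion at the single point $(p,q)=(y_a,\,i_b)$. The permutations $w$ and $w\phi(v)$ agree at every position outside $\{i_1,\dots,i_m\}$ because $\phi(v)\in Q$ only permutes those positions; hence all such positions contribute equally to $r_w(y_a,i_b)$ and to $r_{w\phi(v)}(y_a,i_b)$ and cancel in the difference, leaving only the contribution of the positions $i_1,\dots,i_b$ (exactly those $i_j$ with $i_j\leq i_b$). On those positions the values are $w(i_j)=y_{\sigma(j)}$ and $(w\phi(v))(i_j)=w(i_{v(j)})=y_{(\sigma v)(j)}$, and since the $y_\ell$ are strictly increasing the threshold condition ``value $\geq y_a$'' becomes precisely ``$\sigma(j)\geq a$'' (respectively ``$(\sigma v)(j)\geq a$''). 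Therefore $r_{w\phi(v)}(y_a,i_b)-r_w(y_a,i_b)=r_{\sigma v}(a,b)-r_\sigma(a,b)$, and the left-hand side is nonnegative by hypothesis, which is exactly the inequality we wanted.

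The only delicate point — and the place where a careless choice would fail — is cutting value-space at $p=y_a$ rather than at $p=i_a$: one must use a value actually taken by $w$ (equivalently by $\phi(\sigma)$) at a distinguished position, so that ``$\geq p$'' picks out exactly the largest $m-a+1$ of the relevant values; the integers $i_a$ bear no relation to the set $\{y_1,\dots,y_m\}$ in general, and using them would not line up with the $r_\sigma$'s. The remaining ingredients — that $q=i_b$ makes $\{j:i_j\leq q\}=\{1,\dots,b\}$, and that $w\phi(v)$ has the same multiset of values on the distinguished positions as $w$ — are routine bookkeeping. I note that running the same identity in reverse for an arbitrary point $(p,q)$, reading $a$ and $b$ off from $p$ and $q$, recovers the $S_n$ case of Theorem~\ref{thm:flattening}(2); together with the present proposition this yields $w\leq_{S_n}w\phi(v)$ if and only if $\fl^{S_n}_{Q}(w)\leq_{Q}\fl^{S_n}_{Q}(w)\phi(v)$.
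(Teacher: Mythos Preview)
Your argument is correct and is precisely the approach the paper has in mind: the paper does not spell out a proof but simply remarks that the statement ``can easily be seen from the tableau criterion,'' and your proof is a clean execution of exactly that. The key observation---comparing $r_w$ and $r_{w\phi(v)}$ at the point $(y_a,i_b)$ so that the contributions from positions outside $\{i_1,\dots,i_m\}$ cancel and the remaining terms reduce to $r_\sigma(a,b)$ and $r_{\sigma v}(a,b)$---is the intended one, and your care about cutting at $p=y_a$ rather than $p=i_a$ is well placed.
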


\subsection{Conventions for type B}

We consider the group $B_n$ as the following subgroup of the symmetric group $S_{2n}$, as in the book of Bj\"orner and Brenti~\cite[Section 8.1]{BB}, so
$$B_n:=\{w\in S_{2n}\mid w(i)+w(2n+1-i)=2n+1\mbox{ for all } i, 1\leq i\leq n\}.$$
We can equivalently restate this condition by saying that $w\in B_n$ if $w_0ww_0=w$.

The group $B_n$ is a finite Coxeter group with simple generators $s_0:=(n\,\, n+1)$ and $s_i:=(n-i\,\,n-i+1)(n+i\,\,n+i+1)$ for all $i$ with $1\leq i\leq n-1$.  It acts on $\mathbb{R}^n$ with $s_0$ acting by reflection across the hyperplane $x_1=0$ and $s_i$ acting by reflection across the hyperplane $x_i-x_{i+1}=0$.

Bruhat order on $B_n$ turns out to be equal to the partial order induced from Bruhat order on $S_{2n}$ under this embedding.  (See, for example,~\cite[Cor. 8.1.9]{BB}.)  Given $w\in B_n$, we can define $E(w)$ by considering $w$ as an element of $S_{2n}$.  Because $w_0ww_0=w$, $E(w)$ has a rotational symmetry about $(n+1,n)$, so $(p,q)\in E(w)$ if and only if $(2n+2-p,2n-q)\in E(w)$.  Furthermore, $$r_w(2n+2-p,2n-q)=p-q-1+r_w(p,q),$$ which implies that, for $w\in B_n$, $r_w(p,q)=\max(0,q-p+1)$ if and only if $$r_w(2n+2-p,2n-q)=\max(0,p-q-1)=\max(0,(2n-q)-(2n+2-p)+1).$$

We need slight weakenings of the right hull condition and the concept of being defined by inclusions.  Given $w\in B_n$, we
say that $w$ is {\bf defined by pseudo-inclusions} if, for all $(p,q)\in E(w)$, we have either $r_w(p,q)=\max(0,q-p+1)=r_\id(p,q)$
(as in the definition of being defined by inclusions) or $p=n+1$, $q=n$, and $r_{n+1,n}(w)=1$.  Similarly, $w\in B_n$ satisfies the
{\bf relaxed right hull condition} if either, for every $u\in S_{2n}$ satisfying $u\subseteq H(w)$, we have $u\leq w$, or both
$r_w(n+1,n)=1$ and, for every $u\in S_{2n}$ satisfying $u\subseteq H(w)$ and $r_u(n+1,n)\leq 1$, we have $u\leq w$.
(Note that it is not sufficient to consider only $u\in B_n$, as Example~\ref{exa:426153} shows.  This is because the coessential set as we have
defined it here is an $S_{2n}$ concept and not truly appropriate for $B_n$, as discussed in Section~\ref{sect:essential}.)

\begin{example}
Let $w=362514\in B_3$.  Then $E(w)=\{(4,1), (4, 3), (4,5)\}$.  We have $r_w(4,1)=0$, $r_w(4,3)=1$, and $r_w(4,5)=2$.  The element $w$ is not defined by inclusions since $r_w(4,3)=1$, but it is defined by pseudo-inclusions.  Similarly, it does not satisfy the right hull condition but does satisfy the relaxed right hull condition.
\end{example}

The cycle decomposition of an element of $B_n$ has a special structure.  Since conjugation by $w_0$ fixes $w$ for any $w\in B_n$, conjugating any cycle $c$ in the cycle decomposition of $w$ also gives a cycle $\overline{c}=w_0cw_0$ of $w$.  If $c=\overline{c}$ then we let $\mathbf{c}=c=\overline{c}$ and call $\mathbf{c}$ an {\bf odd cycle} of $w$.  If $c\neq \overline{c}$, then we let $\mathbf{c}=c\overline{c}$ and call $\mathbf{c}$ an {\bf even cycle} of $w$.
Alternatively, $\mathbf{c}$ is an odd cycle if there is an odd number of $i$ such that $i\leq n$ and $\mathbf{c}(i)>n$, and $\mathbf{c}$ is an even cycle if there is an even number of $i$ such that $i\leq n$ and $\mathbf{c}(i)>n$.  It is easy to show that the reflection length of $w$ (as an element of $B_n$) is $n-\ecyc(w)$, where $\ecyc(w)$ is the number of even cycles of $w$.  (The cycle
decomposition for elements of $B_n$ and its relation to reflection length have appeared in the literature many times.  Our description of the cycle decomposition comes from~\cite{AAER} and our terminology from~\cite{ReinerSPS}.  The fact about reflection
length is stated in~\cite{CGG} and~\cite{Kal}.  Curiously,~\cite{CGG} uses the term ``balanced'' cycle to mean an even one,
but~\cite{Kal} uses the term ``balanced'' cycle to mean an odd one!)

Billey--Postnikov avoidance in type B works as follows.  There are two types of simple parabolic subgroups (by which we mean parabolic subgroups isomorphic to simple Coxeter groups) in $B_n$.  Given indices $1\leq i_1<i_2<\cdots<i_m\leq 2n$ with the property that $i_j+i_k\neq 2n+1$ for any $j,k$, we have a simple parabolic subgroup $P$ isomorphic to $S_m$ generated by
$(i_j\,\, i_{j+1})(2n+1-i_{j+1}\,\, 2n+1-i_j)$ for all $j$ with $1\leq j\leq m-1$.  If our isomorphism $\phi: S_m \rightarrow P$
is given by
$$\phi(s_j):=(i_j\,\, i_{j+1})(2n+1-i_{j+1}\,\, 2n+1-i_j),$$ then $\fl^{B_n}_P(w)$ is given by the relative order of $w(i_1),\ldots, w(i_m)$.
In particular, $\fl^{B_n}_P(w) = \fl^{S_{2n}}_Q(w)$, where $Q$ is the parabolic subgroup of $S_{2n}$ generated by the
elements $(i_j\,\, i_{j+1})$.  Hence, given $w\in B_n$, $w$ BP contains $v\in S_m$ if $w$ pattern contains $v$ using some indices $i_1<i_2<\cdots<i_m$ having the property that $i_j+i_k\neq 2n+1$ for any $j,k$.  (Note choosing indices $2n+1-i_m<\cdots<2n+1-i_1$ instead gets the same parabolic subgroup, but the isomorphism $\phi: S_m \rightarrow P$ is changed by
conjugation with $w_0$.)

\begin{example}
Let $w=52863174\in B_4$ and $v=4231\in S_4$.  Taking $i_1=1$, $i_2=2$, $i_3=5$, and $i_4=6$, we see that $w$ BP contains $v$.
\end{example}

Additionally, given indices $i_1<\cdots<i_m<i_{m+1}<\cdots<i_{2m}$ with $i_j+i_{2m+1-j}=2n+1$ for all $j$, we have a parabolic subgroup $P$ isomorphic to $B_m$ generated by $(i_m\,\, i_{m+1})$ and $(i_j\,\, i_{j+1})(i_{2m-j}\,\, i_{2m+1-j})$ for all $j$ with $1\leq j\leq m-1$.  Our isomorphism $\phi: B_m \rightarrow P$ must be given by $$\phi(s_0):=(i_m\,\, i_{m+1})$$ and $$\phi(s_j):=(i_{m-j}\,\,i_{m-j+1})(i_{m+j}\,\,i_{m+j+1}),$$ and $\fl^{B_n}_P(w)$ is given by the relative order of $w(i_1),\ldots, w(i_{2m})$.
Hence, $w\in B_n$ BP contains $v\in B_m$ if $w$ pattern contains $v$ using some indices $i_1<\cdots<i_m<i_{m+1}<\cdots<i_{2m}$ with $i_j+i_{2m+1-j}=2n+1$ for all $j$.

\begin{example}
Let $w=52863174\in B_4$ and $v=426351\in B_3$.  Taking $i_1=1$, $i_2=2$, $i_3=3$, $i_4=6$, $i_5=7$, and $i_6=8$, we see that $w$ BP contains $v$.
\end{example}

Note that, given $v\in S_m$, the results are different if we consider $v$ as an element of $S_m$ or $v$ as an element of $B_m$ via, for example, the standard embedding of $S_m$ in $B_m$.  Similarly, if $v\in S_m$ with $m$ even such that $v$ happens to be an element of $B_{m/2}$, we get different results considering $v$ as an element of $S_m$ or as an element of $B_{m/2}$.

\begin{example}
Let $w=52863174\in B_4$.  Then $w$ BP contains $v=4231\in S_4$, but $w$ does {\em not} BP contain $v=4231\in B_2$, nor does $w$ BP contain $v=42318675\in B_4$.
\end{example}

\begin{example}
This example shows that Proposition~\ref{prop:unflattenbruhat} is false for BP avoidance in $B_n$, even when restricted to simple parabolic subgroups and even when $w\in Q$.  Let $w=426153\in B_3$.  Consider the parabolic $Q$ generated by $R=\{r_1=132546, r_2=426153\}$, with the isomorphism $\phi: S_3\rightarrow Q$ given by $\phi(s_1)=r_1$ and $\phi(s_2)=r_2$.  (This is equivalent to taking $i_1=2$, $i_2=3$, and $i_3=6$.)  Then $w=r_2$, and $u=r_1=132546\leq_{B_3} w$, but $\phi^{-1}(u)=213 \not\leq_{S_3} \phi^{-1}(w)=132$.
\end{example}

\section{Proof of Main Theorem}
\label{sect:proof}

We now state our theorem and outline the proof, defering individual details to subsections.

\begin{theorem*}
Let $w\in B_n$.  Then the following are equivalent.
\begin{enumerate}
\item \label{cond:chambers}The number of chambers of the inversion arrangement $\mathcal{A}_w$ is equal to the number of elements in $[\id,w]$.
\item \label{cond:distance}For any $u\leq w$, the directed distance from $u$ to $w$ in the Bruhat graph is the same as the undirected distance from $u$ to $w$.
\item \label{cond:dbpi} The element $w$ is defined by pseudo-inclusions.
\item \label{cond:relaxedrh} The element $w$ satisfies the relaxed right hull condition.
\item \label{cond:patterns} The element $w$ BP avoids $4231\in S_4$, $35142$, $42513\in S_5$, $351624\in S_6$, $563412$, $653421$, $645231$, $635241$, $624351$, $642531$, $536142$, $426153$, $462513$, $623451\in B_3$, $47618325$, $46718235$, $57163824$, $37581426$, $47163825$, $46172835$, $37518426$, $35718246$, $37145826$, $37154826$, $52618374$, $42681375$, $42618375$, $35172846\in B_4$, and $3517294a68$, $3517924a68$, $3617294a58\in B_5$.
\end{enumerate}
\end{theorem*}

We use the notation $a=10$ in elements of $B_5$ to avoid confusion.  In what follows, we will call elements that satisfy Condition~\ref{cond:distance} {\bf Hultman elements}.

\begin{proof}
The Conditions~\ref{cond:chambers} and~\ref{cond:distance} are equivalent by Theorem~\ref{thm:hultman}.

Proposition~\ref{prop:hull=dbi} shows that Conditions~\ref{cond:dbpi} and~\ref{cond:relaxedrh} are equivalent.

Suppose Condition~\ref{cond:relaxedrh} holds, so $w\in B_n$ satisfies the relaxed right hull criterion.  Then, by Proposition~\ref{prop:rh->dist}, $w$ is Hultman.

Suppose Condition~\ref{cond:dbpi} does not hold, so $w\in B_n$ is not defined by pseudo-inclusions.  Then, by Proposition~\ref{prop:nodbpi->pattern}, $w$ BP contains some $v\in B_m$ or $S_{m+1}$, with $m\leq 5$, such that $v$ is not defined by pseudo-inclusions.  By computer calculation, $v$ BP contains one of the elements listed in Condition~\ref{cond:patterns}.  Since BP containment is transitive, Condition~\ref{cond:patterns} does not hold for $w$ either.

None of the patterns listed in Condition~\ref{cond:patterns} are Hultman, as shown by the data in Figure~\ref{fig:minnondbpi}.  Hence, by Proposition~\ref{prop:pattern->distance}, if $w\in B_n$ does not satisfy Condition~\ref{cond:patterns}, $w$ is not Hultman.
\end{proof}

\subsection{The right hull condition and being defined by inclusions}
\label{sect:hull=dbi}
In this section, we show that Sj\"ostrand's right hull condition and the Gasharov--Reiner condition
of being defined by inclusions are equivalent for permutations.
We first prove the following lemma, which is a restatement of~\cite[Lemma 3.2]{UW}.

\begin{lemma}
\label{lem:NEedgeboxes}
Let $w\in S_n$ and $1\leq p,q\leq n$.  Then the following are equivalent:
\begin{enumerate}
\item $r_w(p,q)=q-p+1$.
\item $w(k)\geq p$ for all $k>q$.
\item $\{1,\ldots,p-1\}\subseteq\{w(1),\ldots,w(q)\}$.
\end{enumerate}
\end{lemma}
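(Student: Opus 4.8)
The statement to prove is Lemma~\ref{lem:NEedgeboxes}, establishing the equivalence of three conditions for $w \in S_n$ and $1 \le p, q \le n$: that $r_w(p,q) = q-p+1$, that $w(k) \ge p$ for all $k > q$, and that $\{1,\ldots,p-1\} \subseteq \{w(1),\ldots,w(q)\}$.

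\medskip

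The plan is to prove the equivalence by a short cyclic chain of implications, working directly from the definition $r_w(p,q) = \#\{k \mid 1 \le k \le q,\ p \le w(k) \le n\}$. First I would observe the complementary counting identity: among the indices $1 \le k \le q$, those with $w(k) \ge p$ and those with $w(k) \le p-1$ partition the set, so $r_w(p,q) = q - \#\{k \le q \mid w(k) \le p-1\}$. Since $\#\{k \le q \mid w(k) \le p-1\} \le p-1$ always (there are only $p-1$ values in $\{1,\ldots,p-1\}$), this immediately gives $r_w(p,q) \ge q - (p-1) = q-p+1$, with equality if and only if $\#\{k \le q \mid w(k) \le p-1\} = p-1$, i.e. if and only if all of the values $1,\ldots,p-1$ occur among $w(1),\ldots,w(q)$. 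That is precisely the equivalence of (1) and (3).

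\medskip

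For the equivalence of (2) and (3): condition (2) says none of the values taken at positions $k > q$ is less than $p$, equivalently every value in $\{1,\ldots,p-1\}$ is taken at some position $k \le q$ (since $w$ is a bijection and every value $1,\ldots,p-1$ is taken somewhere), which is exactly condition (3). So (2) $\Leftrightarrow$ (3) is essentially a restatement using bijectivity of $w$. Combining, all three conditions are equivalent.

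\medskip

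I do not anticipate a genuine obstacle here — the lemma is elementary bookkeeping about a permutation restricted to an initial segment of positions and a final segment of values. The only thing requiring a little care is keeping the inequalities $p \le w(k) \le n$ versus $w(k) \le p-1$ consistent with the off-by-one in $q-p+1$, and making sure the edge cases ($p=1$, so $\{1,\ldots,p-1\}=\emptyset$; or $q-p+1 \le 0$) are handled — though in fact when $p=1$ all three conditions hold trivially, and when $q < p-1$ condition (3) cannot hold, matching $r_w(p,q) \ge 0 > q-p+1$. Since the paper cites this as a restatement of \cite[Lemma 3.2]{UW}, I would keep the proof to a few lines, possibly just writing out the complementary count and the bijectivity observation.
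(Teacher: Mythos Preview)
Your proposal is correct and takes essentially the same approach as the paper: both use bijectivity of $w$ to get $(2)\Leftrightarrow(3)$ and a complementary count to link these to $(1)$. The only cosmetic difference is that the paper partitions $\{k\mid w(k)\ge p\}$ by whether $k\le q$ or $k>q$ (yielding $(1)\Leftrightarrow(2)$ directly), whereas you partition $\{k\le q\}$ by whether $w(k)\ge p$ or $w(k)\le p-1$ (yielding $(1)\Leftrightarrow(3)$ directly).
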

\begin{proof}
Since $w$ is a bijection considered as a function $w:\{1,\ldots,n\}\rightarrow\{1,\ldots,n\}$, the second and third conditions are equivalent.  Now note that
\begin{align*}
n-p+1&=\#\{k\mid w(k)\geq p\}\\
&=\#\{k\leq q \mid w(k)\geq p\}+\#\{k> q\mid w(k)\geq p\}\\
&= r_w(p,q) + \#\{k> q\mid w(k)\geq p\},
\end{align*}
so $r_w(p,q)=q-p+1$ if and only if $\#\{k>q\mid w(k)\geq p\}=n-q$, and the last statement holds if and only if $w(k)\geq p$ for all $k>q$.
\end{proof}

To show the equivalence of the right hull condition and being defined by inclusions, we prove the following slightly stronger statement.

\begin{proposition}
\label{prop:hull=dbi}
Let $w\in S_n$.  Then $u\subseteq H(w)$ if and only if, for all $(p,q)\in E(w)$ with $r_w(p,q)=\max(0,q-p+1)$, $r_u(p,q)\leq r_w(p,q)$.  In particular, if $w$ satisfies the right hull condition if and only if $w$ is defined by inclusions.
\end{proposition}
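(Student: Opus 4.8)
The plan is to prove the biconditional ``$u\subseteq H(w)$ iff $r_u(p,q)\le r_w(p,q)$ for all $(p,q)\in E(w)$ with $r_w(p,q)=\max(0,q-p+1)$'' directly from the geometric description of $H(w)$ and the rank-function characterizations in Lemma~\ref{lem:NEedgeboxes}. The ``in particular'' clause is then immediate: $u\le w$ is equivalent to $r_u(p,q)\le r_w(p,q)$ for all $(p,q)\in E(w)$ by Fulton's lemma, so if $w$ is defined by inclusions the right-hand condition in the proposition is exactly the condition $u\le w$, hence $u\subseteq H(w)\Rightarrow u\le w$; and conversely, if $w$ is not defined by inclusions, then there is some $(p,q)\in E(w)$ with $r_w(p,q)>\max(0,q-p+1)$, and using the proposition one builds a $u\subseteq H(w)$ failing $r_u(p,q)\le r_w(p,q)$, so the right hull condition fails. (For this last step one exhibits $u$ with $r_u(p,q)=\max(0,q-p+1)$ at the relevant box but large elsewhere — or simply invokes that the set of $u$ with $u\subseteq H(w)$ is downward-closed and contains some explicit permutation violating the bad essential-set condition.)

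For the main biconditional, I would first unwind what ``$(p,q)\in E(w)$ with $r_w(p,q)=\max(0,q-p+1)$'' means. The essential boxes with $r_w(p,q)=0$ are the ``southwest'' type: by the definition of $E(w)$ these are boxes where $w(k)\ge p$ for all $k\le q$ (equivalently $\{w(1),\dots,w(q)\}\subseteq\{p,\dots,n\}$), and the corner condition $(p-1,q)\notin D(w)$, $(p,q+1)\notin D(w)$ pins $p$ and $q$ to be extremal such. The essential boxes with $r_w(p,q)=q-p+1$ are the ``northeast'' type handled by Lemma~\ref{lem:NEedgeboxes}: $w(k)\ge p$ for all $k>q$. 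I would show that the two defining inequalities of $H(w)$ — ``$1\le i\le w(k)$ for some $k\le j$'' and ``$w(k)\le i\le n$ for some $k\ge j$'' — fail for a point $(i,j)$ precisely when that point lies strictly northwest of the graph of $w$ in a way detected by a $q-p+1$-type essential box, respectively strictly southeast in a way detected by a $0$-type essential box. Concretely, the first condition fails at $(i,j)$ iff $w(k)<i$ for all $k\le j$, i.e. iff $\{1,\dots,i-1\}\supsetneq$ nothing useful — rather iff $i-1\ge$ all of $w(1),\dots,w(j)$, which via Lemma~\ref{lem:NEedgeboxes} (applied with $p=i$, $q=j$, reading the contrapositive) relates to $r_w$ being small; the second fails iff $w(k)>i$ for all $k\ge j$, i.e. $\{w(j),\dots,w(n)\}\subseteq\{i+1,\dots,n\}$, a $0$-type condition at $(i+1,j-1)$.

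The forward direction ($u\subseteq H(w)\Rightarrow$ rank inequalities): given an essential box of $0$-type, $r_w(p,q)=0$ forces, by the NE-corner structure of $E(w)$, that $j>q$ for every $j$ with $u(j)\ge p$ whenever $u\subseteq H(w)$ — because $(u(j),j)\in H(w)$ with $u(j)\ge p$ and $j\le q$ would contradict the extremality defining the corner; one then needs to argue that $r_u(p,q)\le 0$, i.e. $\{u(1),\dots,u(q)\}\subseteq\{1,\dots,p-1\}$. Symmetrically for $(q-p+1)$-type boxes using Lemma~\ref{lem:NEedgeboxes}(2). The reverse direction (rank inequalities $\Rightarrow u\subseteq H(w)$): suppose $u\not\subseteq H(w)$, so some $(u(j),j)$ violates one of the two $H(w)$ conditions; translate this into the statement that $u$ has too many (resp. too few) large values among its first $q$ columns, sharpen $(j,u(j))$ to a true essential corner of $w$ of the appropriate type, and conclude $r_u(p,q)>r_w(p,q)$ there. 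The main obstacle I anticipate is this ``sharpening'' step: given a single point of $u$ poking out of $H(w)$, one must locate an honest element of $E(w)$ (not merely a box of $D(w)$) of the correct rank type at which the rank inequality is violated; this requires carefully walking along the boundary of the diagram/hull, using the corner conditions $(p-1,q)\notin D(w)$ and $(p,q+1)\notin D(w)$, and is where the bookkeeping with matrix coordinates and the two-sided nature of $H(w)$ gets delicate. Everything else is a routine translation between the inequality descriptions of $H(w)$, the counting identity proved in Lemma~\ref{lem:NEedgeboxes}, and the definition of $r_w$.
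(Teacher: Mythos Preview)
Your plan matches the paper's proof: both directions of the biconditional are handled exactly as you describe, and the paper carries out the ``sharpening'' step you flag as the main obstacle by explicitly constructing indices $j_1,j_2$ (via left-to-right maxima or right-to-left minima of $w$, depending on which defining inequality of $H(w)$ fails) to locate an element of $E(w)$ of the correct rank type whenever a point of $u$ falls outside $H(w)$. Your handling of the converse in the ``in particular'' clause is slightly muddled---you want a $u$ with $r_u(p,q)>r_w(p,q)$ at the bad essential box, not $r_u(p,q)=\max(0,q-p+1)$---but the clean fix (which the paper also uses implicitly) is simply to invoke Fulton's minimality of $E(w)$: the condition at the non-inclusion box is not implied by the others, so some $u$ satisfies all the inclusion-type conditions (hence $u\subseteq H(w)$ by the biconditional) while failing the one at $(p,q)$.
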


\begin{proof}
First we assume that $u\not\subseteq H(w)$ and show that $r_u(p,q)>r_w(p,q)$ for some $(p,q)\in E(w)$ with $r_w(p,q)=\max(0,q-p+1)$.
If $u\not\subseteq H(w)$, either there exists some $i$ such that $w(j)<u(i)$ or $j>i$ for all $j$, or there exists some $i$ such that $w(j)>u(i)$ or $j<i$ for all $j$, or both.  In the first case, let $j_1$ be the largest $j$ such that $j_1\leq i$ and no $k$ satisfies both $k<j_1$ and $w(k)>w(j_1)$.  Note that $j_1$ exists since $j=1$ satisfies both conditions.  Furthermore, let $j_2$ be the smallest $j$ such that $j_2>i$ and no $k$ satisfies both $k<j_2$ and $w(k)>w(j_2)$.  If $w^{-1}(n)\leq i$, then neither $n<u(i)$ nor $w^{-1}(n)>i$ would hold, so $j=w^{-1}(n)$ satisfies both conditions, and $j_2$ exists.  Then $(w(j_1)+1,j_2-1)\in E(w)$, and $r_w(w(j_1)+1,j_2-1)=0$, but $r_u(w(j_1)+1, j_2-1)\neq 0$ since $i\leq j_2-1$ and $u(i)\geq w(j_1)+1$.

In the second case, let $j_1$ be the largest $j$ such that $j_1< i$ and no $k$ satisfies both $k>j_1$ and $w(k)<w(j_1)$.  Here $j_1$ exists since $j=w^{-1}(1)$ satisfies both conditions.  Also, let $j_2$ be the smallest $j$ such that $j_2\geq i$ and no $k$ satisfies both $k>j_2$ and $w(k)<w(j_2)$; $j=n$ satisfies both conditions, so $j_2$ exists. Then $(w(j_2),j_1)\in E(w)$.  Furthermore, there is no $k$ with $w(k)<w(j_2)$ and $k>j_1$, which, by Lemma~\ref{lem:NEedgeboxes}, implies $r_w(w(j_2),j_1)=j_1-w(j_2)+1$.  Also by Lemma~\ref{lem:NEedgeboxes}, $r_u(w(j_2),j_1)>r_w(w(j_2), j_1)$.

Now we assume that $u\subseteq H(w)$ and show that $r_u(p,q)=r_w(p,q)$ for all $(p,q)$ such that $r_w(p,q)=\max(0,q-p+1)$.  (We do not need to assume for this part that $(p,q)\in E(w)$, so in fact we prove a stronger statement.)  Suppose $(p,q)\in E(w)$ and $r_w(p,q)=0$.  Then $w(k)< p$ for all $q\leq j$.  Hence, if $u\subseteq H(w)$, $u(i)<p$ for all $i\leq q$ since, otherwise, there would exist $k'\leq k\leq q$ with $w(k')\geq u(k)\geq p$, contradicting our assumption that $r_w(p,q)=0$.  Therefore, $r_u(p,q)=0$.

Similarly, suppose $r_w(p,q)=q-p+1$.    By Lemma~\ref{lem:NEedgeboxes}, $w(k)\geq p$ for all $k>q$.  Hence, if $u\subseteq H(w)$, we also have $u(k)\geq p$ for all $k>q$ since, otherwise, there would exist $k'\geq k>q$ with $w(k')\leq u(k)<p$.  Therefore, $r_u(p,q)=q-p+1$.

To see the last statement of our proposition, note that $w$ fails to be defined by inclusions if and only if there exists $(p,q)\in E(w)$ such that $r_w(p,q)\neq\max(0,q-p+1)$, which means that, in order to have $u\leq w$, $u$ must satisfy a condition beyond $u\subseteq H(w)$.
\end{proof}

\subsection{Defined by pseudoinclusion elements are Hultman}

In this section, we prove the following proposition.

\begin{proposition}
\label{prop:rh->dist}
Suppose $w\in B_n$ satisfies the relaxed right hull condition.  Then $\ell_D(u,w)=\ell_T(u,w)$ for all $u\leq w$.
\end{proposition}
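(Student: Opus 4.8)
The plan is to follow the structure of Hultman's argument for $S_n$, where one shows that for any $u\leq w$ with $w$ satisfying the hull condition, there is a directed path from $u$ to $w$ in the Bruhat graph of length exactly $\ell_T(w^{-1}u)=\ell_T(u,w)$. Since $\ell_T(u,w)\leq\ell_D(u,w)$ always holds, it suffices to produce such a path. The key is to write $w^{-1}u$ (or rather the relevant "correction" element) as a product of $\ell_T(u,w)$ reflections in such a way that multiplying $u$ successively by these reflections stays $\leq w$ and strictly increases length at each step. In type B this means working inside $S_{2n}$ but being careful that the element $w^{-1}u\in B_n$ decomposes into \emph{even} cycles (those contributing reflection length $n-\ecyc$), and that each reflection in our factorization corresponds either to an $s_i$-type reflection (a pair of $S_{2n}$-transpositions, $1\leq i$) or an $s_0$-type reflection (a single $S_{2n}$-transposition $(n\,\,n+1)$, or more generally $(i\,\,2n+1-i)$). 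The cycle-decomposition description recalled in the Preliminaries, together with $\ell_T(w)=n-\ecyc(w)$, is exactly what lets us count the path length correctly.

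First I would reduce to showing the following local statement: if $u\leq w$ and $u\neq w$, then there is a reflection $t\in T$ with $u<ut\leq w$ and $\ell_T((ut)^{-1}w)=\ell_T(u^{-1}w)-1$; iterating then produces a directed path of the required length. To find such a $t$, pick $(p,q)\in E(w)$ with $r_u(p,q)<r_w(p,q)$ (such a pair exists since $u\neq w$ and $E(w)$ determines the interval), and use the hull condition: because $u\subseteq H(w)$ (and, in the relaxed case, $r_u(n+1,n)\leq 1$ when needed), the graph of $u$ sits inside the right hull, which constrains how $u$ can fail to be $\leq w$ and lets us locate a pair of positions to swap — exactly as in Hultman's proof, where one moves an entry of $u$ toward its target within the hull. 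The swap must be chosen $w_0$-equivariantly: either a single transposition $(i\,\,2n+1-i)$ (when the move is "centrally symmetric," using the relaxed slack at $(n+1,n)$) or a $w_0$-conjugate pair of disjoint transpositions; in both cases the result is again in $B_n$, still lies in $H(w)$ (or still satisfies the relaxed condition), is still $\leq w$ by the hull characterization in Proposition~\ref{prop:hull=dbi} extended to $B_n$, has strictly larger length, and reduces $\ell_T$ to $w$ by exactly one via the cycle/$\ecyc$ count.

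The main obstacle — and where the argument is "significantly more complicated" than Hultman's — is the bookkeeping around the center column/row $(n+1,n)$ and the relaxed condition: one must verify that the slack $r_w(n+1,n)=1$ is used at most once along the whole path, that an $s_0$-type swap genuinely decreases $\ell_T$ by one (it changes an odd cycle into... here one must carefully track how the number of even cycles $\ecyc$ changes when composing with $(i\,\,2n+1-i)$ versus a symmetric pair), and that after such a swap the element still satisfies the \emph{relaxed} hull condition rather than the full one. A secondary subtlety is that $H(w)$ is an $S_{2n}$-notion while we need the intermediate elements to remain in $B_n$: one has to check that the greedy choice of swap can always be made symmetric, which may force choosing between two candidate swaps (the "$j_1,j_2$" construction in the proof of Proposition~\ref{prop:hull=dbi}) and arguing one of them is symmetric. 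I expect the bulk of the work to be a careful case analysis of these symmetric swaps near the center, with the cycle-decomposition formula $\ell_T(w)=n-\ecyc(w)$ doing the heavy lifting for the length count, mirroring how $\ell_T(w)=n-\cyc(w)$ is used in the $S_n$ case.
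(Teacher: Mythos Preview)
Your reduction to the ``local step'' (find $t$ with $u<ut\leq w$ and $\ell_T(w^{-1}ut)=\ell_T(w^{-1}u)-1$) is correct and is exactly how the paper proceeds. However, the mechanism you propose for \emph{finding} $t$ is not the one that works, and your sketch has a genuine gap.

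You propose to locate $t$ by looking at an essential-set box $(p,q)\in E(w)$ where $r_u$ and $r_w$ differ and then making a ``greedy, $w_0$-equivariant swap''. But nothing in that picture forces the resulting $t$ to satisfy $\ell_T(w^{-1}ut)=\ell_T(w^{-1}u)-1$: a reflection chosen only for the sake of moving $u$ upward inside $H(w)$ need not split a cycle of $w^{-1}u$. In Hultman's $S_n$ proof the hull geometry is \emph{not} used to choose $t$; rather, one takes a cycle $c$ of $w^{-1}u$, works inside the type-A parabolic $P_c$ supported on that cycle, notes that $c$ has maximal reflection length there (so \emph{any} $t\in P_c$ drops $\ell_T$ by one), and then uses flattening to find $t\in P_c$ with $u<ut\leq w$. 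The hull is used only to verify $ut\leq w$ after the fact. The paper follows the same architecture in $B_n$: pick a nontrivial even cycle $\mathbf{c}=c\overline{c}$ of $w^{-1}u$, work in the type-A parabolic $P_c\subseteq B_n$, and find $t$ there via flattening. (When all nontrivial cycles are odd, $w^{-1}u$ already has maximal reflection length in the full type-B parabolic on its support, and that case is easy.)

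There is a second, more serious issue you have not identified. The flattening step requires knowing $\fl^{B_n}_{P_c}(u)<_{P_c}\fl^{B_n}_{P_c}(w)$, i.e.\ the \emph{converse} of Theorem~\ref{thm:flattening}, and this converse (Proposition~\ref{prop:unflattenbruhat}) is \emph{false} in $B_n$. The paper's workaround is the real content of the argument: one shows the $S_{2n}$ inequality $wc<_{S_{2n}} w$ (here $wc\notin B_n$!), using the relaxed right hull condition to control $r_{wc}(n+1,n)$, and then exploits the coincidence $\fl^{B_n}_{P_c}(w)=\fl^{S_{2n}}_Q(w)$ and $\fl^{B_n}_{P_c}(w\mathbf{c})=\fl^{S_{2n}}_Q(wc)$, so that Proposition~\ref{prop:unflattenbruhat} for $S_{2n}$ gives $\tilde u<\tilde w$ inside $P_c$. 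The case analysis around $r_w(n+1,n)=1$ enters precisely here (to get $wc<w$) and again at the end (to get $r_{ut}(n+1,n)\leq 1$), not as a ``slack used at most once along the path''---that phrasing misreads the relaxed condition, which is a fixed property of $w$, not a resource.
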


Our proof largely follows that of Hultman~\cite[Lemma 4.4]{Hult} showing that permutations satisfying the right hull condition are Hultman, but it is significantly more complicated because Proposition~\ref{prop:unflattenbruhat} does not hold for $B_n$ and arguments to work around this issue are necessary.

\begin{proof}
Let $w\in B_n$ satisfy the relaxed right hull condition.  We need to show that, given any $u\leq w$, we have $\ell_D(u,w)=\ell_T(u,w)$.  By induction on $\ell(w)-\ell(u)$, it suffices to show that there exists a reflection $t$
with $u<ut\leq w$ and $\ell_T(ut,w)=\ell_T(w^{-1}ut, \id)=\ell_T(w^{-1}u, \id)-1$.

We call a cycle {\bf trivial} if it consists simply of a pair of fixed points and {\bf nontrivial} otherwise.
First suppose that $w^{-1}u$ has no nontrivial even cycles and $u<w$.  Let $P$ be the parabolic subgroup generated by the reflections
$(i\,\, 2n+1-i)$ and $(i\,\, j)(2n+1-i\,\, 2n+1-j)$ where $w^{-1}u(i)\neq i$ and $w^{-1}u(j)\neq j$, or equivalently $u(i)\neq w(i)$ and $u(j)\neq w(j)$.  Observe that $uP=wP$.  Let $\tilde{u}=\fl^{B_n}_P(u)$ and $\tilde{w}=\fl^{B_n}_P(w)$.  Since, in this case,
$\fl^{B_n}_P(w)=\fl^{S_{2n}}_Q(w)$ where $Q$ is the parabolic subgroup of $S_{2n}$ generated by the reflections $(i\,\,j)$ for which $u(i)\neq w(i)$ and $u(j)\neq w(j)$, we have $\tilde{u}<\tilde{w}$ by Proposition~\ref{prop:unflattenbruhat} (and the fact that Bruhat order in $B_n$ is induced from Bruhat order for $S_{2n}$).  Hence, by the definition of Bruhat order, there exists a reflection $t\in P$ such that $\tilde{u}<_P\tilde{u}t\leq_P \tilde{w}$.  By Theorem~\ref{thm:flattening}, $u<ut\leq w$.  Since all nontrivial cycles of $w^{-1}u$ are odd, $w^{-1}u$ has maximal reflection length in $P$, so $\ell_T(w^{-1}ut, \id)=\ell_T(w^{-1}u, \id)-1$.

Now suppose there is a nontrivial even cycle $\mathbf{c}=c\overline{c}$ in the cycle decomposition of $w^{-1}u$.
We first show $wc<w$ in Bruhat order on $S_{2n}$.  (Note $wc\not\in B_n$.)  First note that $wc(i)=u(i)$ or $wc(i)=w(i)$ for all $i$.  Since $u<w$, $(u(i),i)\in H(w)$ for all $i$,
so $(wc(i),i)\in H(w)$ for all $i$.  Hence, if $w$ satisfies the (unrelaxed) right hull condition, $wc<w$.  Otherwise,
$r_w(n+1,n)=1$ and $r_u(n+1,n)\leq 1$.  If $r_u(n+1,n)=0$, then $r_{wc}(n+1,n)\leq 1$.  If $r_u(n+1,n)=1$, then 
let $a$ be the unique index such that $a\leq n$ and $w(a)>n$, with $\alpha:=w(a)$, and let $b$ be the unique index such
that $b\leq n$ and $w(b)>n$, with $\beta:=u(b)$.  Now, if $a=b$ or $\alpha=\beta$, then $r_{wc}(n+1,n)=1$, so $wc<w$.
Otherwise, if $c(b)=b$, then $(wc)(b)=w(b)\leq n$, so $r_{wc}(n+1,n)\leq 1$.

Finally, we are left with the case where $r_u(n+1,n)=r_w(n+1,n)=1$, $a\neq b$, $\alpha\neq\beta$, and $c(b)\neq b$.
Since $a\neq b$, $c(b)=w^{-1}(\beta)>n$.
We now show that $c(a)\neq a$.  Note that, as $b\leq n$, there must exist some $b'>n$ such that $c(b')\leq n$.  Furthermore, $b'\neq 2n+1-b$, since $\mathbf{c}$ is even, so $c(2n+1-b)=2n+1-b$.  Since $r_u(n+1,n)=1$ and $u\in B_n$, we can only have $i> n$ and $u(i)\leq n$ if $i=2n+1-b$, so $u(b')> n$.
It follows that, as $c(b')=w^{-1}(u(b'))\leq n$, $u(b')=\alpha$ and $c(b')=a$, so $c(a)\neq a$.
Hence, $wc(a)=u(a)$, and the only index $i\leq n$ with $wc(i)>n$ is $i=b$.
Therefore, $r_{wc}(n+1,n)=1$.  Since we also have $wc\in H(w)$, and $w$ satisfies
the relaxed right hull property, $wc<w$.

Now let $i_1<i_2\cdots<i_m$ be the indices such that $c(i_j)\neq i_j$.  (In particular, we let $m$ be the number of such indices.)  Consider the parabolic subgroup $P_c\subseteq B_n$ with the isomorphism $\phi: S_m \rightarrow P_c$ given by $\phi(s_j)=(i_j\,\, i_{j+1})(2n+1-i_{j+1}\,\,2n+1-i_j)$.  Note $\mathbf{c}\in P_c$ and, indeed, $\mathbf{c}$ is an element of maximal reflection length 
in $P_c$, so for any reflection $t\in P_c$, $\ell_T(w^{-1}ut,\id)=\ell_T(w^{-1}u,\id)-1$.

Moreover, $\fl^{B_n}_{P_c}(w\mathbf{c})=\fl^{S_{2n}}_Q(wc)$, and $\fl^{B_n}_{P_c}(w)=\fl^{S_{2n}}_Q(w)$, where $Q$ is the parabolic subgroup of $S_{2n}$ generated by the elements $(i_j\,\, i_{j+1})$.  Let $\tilde{u}=\fl^{B_n}_{P_c}(u)$ and
$\tilde{w}=\fl^{B_n}_{P_c}(w)$.  As $wc<w$, by Proposition~\ref{prop:unflattenbruhat}, $\fl^{S_{2n}}_Q(wc)<\fl^{S_{2n}}_Q(w)$, and
$$\tilde{u}=\fl^{B_n}_{P_c}(u)=\fl^{B_n}_{P_c}(w\mathbf{c})=\fl^{S_{2n}}_Q(wc)<\fl^{S_{2n}}_Q(w)=\fl^{B_n}_{P_c}(w)=\tilde{w}.$$
By the definition of Bruhat order, there exists $t\in P_c$ such that $\tilde{u}<_{P_c}\tilde{u}t\leq_{P_c} \tilde{w}$.  By
Proposition~\ref{thm:flattening}, $u<ut$.

We now show that $ut\leq w$.  For $i$ such that $\mathbf{c}(i)\neq i$, since $\tilde{u}t\leq \tilde{w}$, $$(i, ut(i))\in H(\{(w(i),i)\mid  \mathbf{c}(i)\neq i\})\subseteq H(w).$$  For $i$ such that $\mathbf{c}(i)=i$, we know $ut(i)=u(i)$, so $(ut(i),i)\in H(w)$ since $u\leq w$.  Hence, in the case where $w$ satisfies the (unrelaxed) right hull condition, $ut\leq w$.

Otherwise, $r_w(n+1,n)=1$, and $r_u(n+1,n)\leq 1$.
Let $t=(j\,\, k)(2n+1-j\,\, 2n+1-k)\in P_c$, and
assume without loss of generality that $j<k$ and that $c(j)\neq j$ while $\overline{c}(j)=j$.
Assume for contradiction that $r_{ut}(n+1,n)>r_u(n+1,n)$.  Then $j,u(j)\leq n$, and $k, u(k)>n$.
(We would also have $r_{ut}(n+1,n)>r_u(n+1,n)$ if $2n+1-k, u(2n+1-k)\leq n$ and $2n+1-j, u(2n+1-j)>n$,
but that amounts to the same condition.)
But then, if $j,u(j)\leq n$ and $k,u(k)>n$, we would have $k>n$ and $ut(k)\leq n$, while $j\leq n$ and $ut(j)\geq n$.
However, this implies $$\{(ut(j),j),(ut(k),k)\}\subseteq H(\{(w(i), i)\mid c(i)\neq i\})$$ since $\tilde{u}t\leq_{P_c}\tilde{w}$.
As before, let $a$ be the unique index such that $a\leq n$ and $w(a)>n$, which implies that $2n+1-a$ is the unique
index with $2n+1-a>n$ and $w(2n+1-a)\leq n$.  Since $\mathbf{c}$ is an even cycle,
either $c(a)=a$ or $c(2n+1-a)=2n+1-a$.  In the first case, there does not exist any $i$ with $c(i)\neq i$ such that
$i\leq n$ and $w(i)>n$, so $$(ut(j),j)\not\in H(\{(w(i), i)\mid c(i)\neq i\}),$$ a contradiction, and, in the second case,
$$(ut(k),k)\not\in H(\{(w(i), i)\mid c(i)\neq i\}).$$  Therefore, $r_{ut}(n+1,n)=r_u(n+1,n)\leq r_w(n+1,n)$.  Since
$w$ satisfies the relaxed right hull condition, $ut\leq w$.
\end{proof}

\subsection{Failure of inclusion conditions and pattern containment}

Now we show that an element that fails to be defined by pseudo-inclusions must fail to satisfy the distance condition.  We first show that any element not defined by pseudo-inclusions BP contains one of 27 elements.  We give a short proof here that relies on a computer calculation.  A longer proof by hand involving a tedious case-by-case analysis of the possible ways we can have $i_j+i_k=2n+1$ in the notation of the following proof is possible.

\begin{proposition}
\label{prop:nodbpi->pattern}
Suppose $w\in B_n$ is not defined by pseudo-inclusions.  Then $w$ BP contains some $v\in W$, where $W$ is either $S_{m+1}$ or
$B_m$ for some $m\leq 5$, such that $v$ is not defined by pseudo-inclusions.
\end{proposition}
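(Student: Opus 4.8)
The plan is to localize the failure of the pseudo-inclusion condition to a bounded set of columns of $w$, to symmetrize that set so that it cuts out a parabolic subgroup of $B_n$, and then to check by computer (or, with more effort, by hand) that the resulting pattern has size at most $5$.

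First I would reduce to a single bad essential box. Since $w$ is not defined by pseudo-inclusions there is a box $(p,q)\in E(w)$ with $r_w(p,q)\neq\max(0,q-p+1)$ that is not the exceptional pair $(n+1,n)$ with $r_w(n+1,n)=1$; if the only such box is $(n+1,n)$, then it satisfies $r_w(n+1,n)\geq 2$. Either way $w$ is also not defined by inclusions as an element of $S_{2n}$, and by the rotational symmetry of the essential set the box $(2n+2-p,2n-q)\in E(w)$ is bad as well. Using the description $E(w)=\{(p,q)\mid w^{-1}(p-1)\leq q<w^{-1}(p),\ w(q)<p\leq w(q+1)\}$, together with Lemma~\ref{lem:NEedgeboxes} when $q\geq p$ and the definition of $r_w$ when $q<p$, I would extract an explicit set $C=\{i_1<\cdots<i_k\}$ of at most six columns: the columns certifying that $(p,q)\in E(w)$, plus one or two further columns certifying that $r_w(p,q)$ is too large. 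This is exactly the localization behind the pattern criterion of Gasharov and Reiner~\cite{GR}, and one may arrange that the pattern of $w$ on the columns of $C$ is one of $4231$, $35142$, $42513$, $351624$; in particular it is not defined by inclusions, with a bad box inherited from $(p,q)$.

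The set $C$ need not be admissible for BP containment in $B_n$: that requires $C$ to be complement-free (no $i,j\in C$ with $i+j=2n+1$), giving an $S_{|C|}$ pattern, or complement-closed ($C=2n+1-C$), giving a $B_{|C|/2}$ pattern. If $C$ is already complement-free we are done: $w$ BP contains $v:=\fl^{B_n}_{P_C}(w)\in S_{|C|}$, which is not defined by inclusions and has $|C|\leq 6$. Otherwise I would pass to $C':=C\cup(2n+1-C)$, which is complement-closed, so that $w$ BP contains $v:=\fl^{B_n}_{P_{C'}}(w)\in B_{|C'|/2}$. The point that needs care is that $v$ is still not defined by pseudo-inclusions. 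Flattening to $C'$ preserves the relative order of the entries in the columns of $C$, so the bad box of $w$ descends to a box of $v$ with the wrong rank via the same certificate; what must be ruled out is that this descended box is the central box $(|C'|/2+1,\ |C'|/2)$ of $v$ with rank exactly $1$. When $(p,q)$ was forced to $(n+1,n)$ its rank is $\geq 2$ and this is preserved; in the remaining cases one keeps the descended box off-center by including enough of the boundary columns of the certificate in $C$, using also that the partner box $(2n+2-p,2n-q)$ is bad. I expect this bookkeeping — the interaction of the symmetry $w_0ww_0=w$ with the constraint distinguishing the two families of parabolic subgroups of $B_n$, and in particular the possibility that a bad box collides with the exceptional central box — to be the main obstacle, and it is the step with no counterpart in the $S_n$ case.

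Finally, to improve the soft bound $m\leq 6$ coming from $|C'|\leq 2|C|\leq 12$ to the stated $m\leq 5$: since $2n+1-C$ is a certificate for the partner box $(2n+2-p,2n-q)$, the set $C'$ certifies both boxes of the symmetric pair at once, and a case-by-case analysis of which sums $i_j+i_k$ can equal $2n+1$ shows $|C'|\leq 10$. The shorter route, which I would take, is instead to observe that after the preceding steps only the case $|C'|=12$ remains, i.e.\ that it suffices to check that every element of $B_6$ that is not defined by pseudo-inclusions BP contains an element of $B_m$ or $S_{m+1}$ with $m\leq 5$ that is not defined by pseudo-inclusions; since $|B_6|=2^6\,6!=46080$ this is a routine finite computation, which simultaneously produces the minimal list.
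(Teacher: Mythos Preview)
Your plan is essentially the paper's own proof: pick at most six column indices $i_1,\ldots,i_6$ witnessing the bad essential box (with $i_3=w^{-1}(p-1)$, $i_4=q$, $i_5=q+1$, $i_6=w^{-1}(p)$ and two rank witnesses $i_1,i_2$), pass to an $S_{m+1}$ parabolic if the set is complement-free and to a $B_m$ parabolic otherwise, and check that the descended box is still bad. Two efficiencies in the paper are worth noting. First, your final paragraph is unnecessary: the case $|C'|=12$ cannot occur in the ``otherwise'' branch, since being non--complement-free means some $i_j+i_k=2n+1$, so at least two elements of $C$ already lie in $2n+1-C$ and $|C'|\leq 2|C|-2\leq 10$ immediately---no case analysis and no $B_6$ computer check is needed. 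Second, for the step you flag as the main obstacle (ruling out that the descended box is the central box $(m+1,m)$ with rank $1$), the paper's argument is direct rather than ``including enough boundary columns'': since $i_4=q$, $i_5=q+1$ and $w(i_3)=p-1$, $w(i_6)=p$ are all in $C'$, the descended box is $(m+1,m)$ only if $i_4\leq n<i_5$ and $w(i_3)\leq n<w(i_6)$, which forces $q=n$ and $p=n+1$; so when $(p,q)\neq(n+1,n)$ the descended box is automatically off-center, and when $(p,q)=(n+1,n)$ the rank $\geq 2$ survives flattening as you say.
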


\begin{proof}
If $w\in B_n$ is not defined by pseudo-inclusions, then either $(n+1,n)\in E(w)$ and $r_w(n+1,n)\geq 2$, or $(p,q)\in E(w)$ (with $(p,q)\neq (n+1,n)$) and $r_w(p,q)\geq \max(1, q-p+2)$.

Suppose $(n+1,n)\in E(w)$ and $r_w(n+1,n)\geq 2$.  Since $r_w(n+1,n)\geq 2$, there exist $i_1, i_2\leq n$ with $w(i_1),w(i_2)>n$.  Let $i_3=w^{-1}(n)$ and $i_4=n$.  Since $(n+1,n)\in E(w)$, $i_3, w(i_4)\leq n$.  Let $P$ be the parabolic
subgroup generated by the (not necessarily simple) reflections $(i_j\,\,i_k)(2n+1-i_k\,\,2n+1-i_j)$ and $(i_4\,\,2n+1-i_4)$. Depending on whether $i_3=i_4$ or not,  $P\cong B_3$ or $P\cong B_4$; let $m=3$ or $m=4$ respectively, and let $\phi: B_m\rightarrow P$ be the isomorphism.  Consider $v=\phi^{-1}(\fl^{B_n}_P(w))$.  Since $i_3<n$ and $w(i_3)=n$, we have $v^{-1}(m)\leq m$, and since $i_4=n$ with $w(i_4)<n$, we have $v(m)\leq m$.  This implies that $(m+1,m)\in E(v)$.  Furthermore,
$r_v(m+1,m)\geq 2$ since (the flattenings of) the points at $i_1$ and $i_2$ both count towards $r_v(m+1,m)$.

Now suppose $(p,q)\in E(w)$ with $(p,q)\neq (n+1,n)$ and $r_w(p,q)\geq \max(1, q-p+2)$.  Since $r_w(p,q)\geq \max(1, q-p+2)$, we must have some $i_1\leq q$ with $w(i_1)\geq p$ and some $i_2>q$ with $w(i_2)<p$.  Let
$i_3=w^{-1}(p-1)$, $i_6=w^{-1}(p)$, $i_4=q$, and $i_5=q+1$.  Since $(p,q)\in E(w)$, $i_3\leq i_4<i_5\leq i_6$, and $w(i_4)\leq w(i_3)<w(i_6)\leq w(i_5)$.

First we consider the case where $i_j+i_k\neq 2n+1$ for any $j,k$ with $1\leq j,k\leq 6$.  Let $P$ be the parabolic subgroup generated by the (not necessarily simple) reflections $(i_j\,\,i_k)(2n+1-i_k\,\,2n+1-i_j)$.  Depending on whether $i_3=i_4$ and $i_5=i_6$, $P\cong S_{m+1}$ for some $m$, $3\leq m\leq 5$, with some isomorphism $\phi: S_{m+1}\rightarrow P$.  Consider $v=\phi^{-1}(\fl^{B_n}_P(w))$.  Note $(p,q)$ will correspond to a box $(\tilde{p},\tilde{q})\in E(v)$, and $(w(i_1), i_1)$ and $(w(i_2),i_2)$ will force $r_v(\tilde{p},\tilde{q})\geq\max(1,\tilde{q}-\tilde{p}+2)$, so $v$ is not defined by inclusions.

If $i_j+i_k=2n+1$ for some $j,k$, then let $P$ be the parabolic subgroup generated by $(i_j\,\,i_k)(2n+1-i_k\,\,2n+1-i_j)$ and
$(i_j\,\,2n+1-i_j)$.  Depending on how many coincidences of indices we have, $P\cong B_m$ for some $m$, $3\leq m\leq 5$.  (We
cannot have $m=2$, because if $i_3=i_4=2n+1-i_5=2n+1-i_6$, then $(p,q)=(n+1,n)$.)  Again $(p,q)$ will correspond to a box $(\tilde{p},\tilde{q})\in E(v)$.  We cannot have $(\tilde{p},\tilde{q})=(m+1,m)$ because, in that case, we would have had $w(i_3)\leq n<w(i_6)$ and $i_4\leq n<i_5$, which would imply $(p,q)=(n+1,n)$.  Also, $(w(i_1),i_1)$ and $(w(i_2),i_2)$ will
force $r_v(\tilde{p},\tilde{q})\geq\max(1,\tilde{q}-\tilde{p}+2)$.

Hence, in all cases, $w$ BP contains some $v$ that is not defined by pseudo-inclusions.
\end{proof}

Using a computer program, we find all the elements of $S_{m+1}$ and $B_m$ with $m\leq 5$ that are not defined by (pseudo)-inclusions.  We then find among these elements the ones that do not BP contain some other element not defined by (pseudo)-inclusions.  These elements $w$ are listed in the table in Figure~\ref{fig:minnondbpi}.

\begin{figure}
\begin{tabular}{c|c|c|c|c}
$W$ & $w$ & $u$ & $\ell_D(u,w)$ & $\ell_T(u,w)$ \\
\hline
$S_4$ & 4231 & 2143 & 4 & 2 \\
$S_5$ & 35142 & 12435 & 5 & 3 \\
$S_5$ & 42513 & 13245 & 5 & 3 \\
$S_6$ & 351624 & 423156 & 6 & 4 \\
$S_6$ & 351624 & 126543 & 6 & 4 \\ 
$B_3$ & 426153 & 132546 & 4 & 2 \\
$B_3$ & 536142 & 142536 & 4 & 2 \\
$B_3$ & 563412 & 124356 & 4 & 2 \\
$B_3$ & 462513 & 135246 & 4 & 2 \\
$B_3$ & 635241 & 153426 & 4 & 2 \\
$B_3$ & 635241 & 241635 & 4 & 2 \\
$B_3$ & 642531 & 315264 & 4 & 2 \\
$B_3$ & 642531 & 153426 & 4 & 2 \\
$B_3$ & 645231 & 154326 & 4 & 2 \\ 
$B_3$ & 645231 & 351624 & 4 & 2 \\
$B_3$ & 623451 & 132546 & 4 & 2 \\
$B_3$ & 624351 & 135246 & 4 & 2 \\
$B_3$ & 624351 & 142536 & 4 & 2 \\
$B_3$ & 653421 & 214365 & 4 & 2 \\
$B_4$ & 35172846 & 12436578 & 5 & 3 \\
$B_4$ & 46172835 & 12536478 & 5 & 3 \\
$B_4$ & 57163824 & 14627358 & 5 & 3 \\
$B_4$ & 57163824 & 12654378 & 5 & 3 \\
$B_4$ & 47163825 & 13527468 & 5 & 3 \\
$B_4$ & 47163825 & 12645378 & 5 & 3 \\
$B_4$ & 52618374 & 14236758 & 5 & 3 \\
$B_4$ & 52618374 & 13254768 & 5 & 3 \\
$B_4$ & 47618325 & 13254768 & 5 & 3 \\
$B_4$ & 42681375 & 13427568 & 5 & 3 \\
$B_4$ & 42681375 & 13254768 & 5 & 3 \\
$B_4$ & 42618375 & 13245768 & 5 & 3 \\
$B_4$ & 37154826 & 12536478 & 5 & 3 \\
$B_4$ & 37154826 & 12463578 & 5 & 3 \\
$B_4$ & 37145826 & 12436578 & 5 & 3 \\
$B_4$ & 37581426 & 14627358 & 5 & 3 \\
$B_4$ & 37581426 & 12654378 & 5 & 3 \\
$B_4$ & 37518426 & 12645378 & 5 & 3 \\
$B_4$ & 37518426 & 14263758 & 5 & 3 \\
$B_4$ & 35718246 & 12463578 & 5 & 3 \\
$B_4$ & 46718235 & 12354678 & 5 & 3 \\
$B_5$ & 3617294a58 & 124365879a & 6 & 4 \\
$B_5$ & 3617294a58 & 125347869a & 6 & 4 \\
$B_5$ & 3517924a68 & 124365879a & 6 & 4 \\
$B_5$ & 3517924a68 & 124538679a & 6 & 4 \\
$B_5$ & 3517294a68 & 124356879a & 6 & 4 \\
\end{tabular}
\caption{\label{fig:minnondbpi}BP-containment-minimal non-Hultman elements in types A and B}
\end{figure}

For each $w$ in Figure~\ref{fig:minnondbpi}, we list all the elements $u<w$ such that $\ell_D(u,w)>\ell_T(u,w)$.  Note there is at least one
such $u$ for each $w$, so none of these elements $w$ are Hultman.

\subsection{Failure of containment and Hultman's condition}

To complete our proof, we show the following.

\begin{proposition}
\label{prop:pattern->distance}
Suppose $w\in W$ BP contains $v\in W'$ and $v$ is not Hultman.  Then $w$ is not Hultman.
\end{proposition}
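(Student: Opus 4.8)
The plan is to prove the contrapositive in a quantitative form: if $w$ BP contains $v$ and $w$ is Hultman, then $v$ is Hultman. By Theorem~\ref{thm:hultman}, being Hultman is equivalent to the distance condition $\ell_D(u,w)=\ell_T(u,w)$ for all $u\leq w$. So suppose $w\in W$ is Hultman, $P\subseteq W$ is a parabolic subgroup, and $\phi:(W',S')\to(P,R)$ is an isomorphism with $\fl^W_P(w)=\phi(v)$. We must show that for every $v'\leq_{W'} v$, the directed and undirected distances from $v'$ to $v$ agree inside $\mathcal{B}(W')$. Transporting through $\phi$, and identifying $W'$ with $P$, this amounts to showing that $\fl^W_P(w)$ is Hultman as an element of $P$ whenever $w$ is Hultman in $W$.

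First I would reduce to the case $w\in P$ (equivalently $w=\fl^W_P(w)$), as follows. Since $\fl^W_P$ is $P$-equivariant (Theorem~\ref{thm:flattening}(1)), we may replace $w$ by $w':=w\cdot(\fl^W_P(w))^{-1}\cdot \tilde w$ for a suitable representative—more cleanly, observe that the coset $wP$ has a minimal-length representative $w^P$ with $w = w^P\cdot\fl^W_P(w)$ and $\ell(w)=\ell(w^P)+\ell(\fl^W_P(w))$; this is the standard parabolic decomposition, and $\fl^W_P(w)$ is precisely the ``$P$-part'' $w_P$ of $w$. The key fact I want is: \emph{if $w$ is Hultman then so is $w_P$}. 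Here is where Theorem~\ref{thm:flattening} does the work. Given $v'\leq_P w_P$, lift to $u:=w^P v'\leq_W w$ (using Theorem~\ref{thm:flattening}(2) together with $\fl^W_P(w^P v')=v'\leq_P w_P=\fl^W_P(w)$). Since $w$ is Hultman, $\ell_D(u,w)=\ell_T(u,w)=\ell_T(w^{-1}u)=\ell_T(v'^{-1}w_P)$ (the last equality because $w^{-1}u = w_P^{-1}(w^P)^{-1}w^P v' = w_P^{-1}v'$... wait, one must be careful about sides; since the paper multiplies on the right and $\ell_T(u,w)=\ell_T(w^{-1}u)$, and $w^{-1}u=w_P^{-1}v'$ up to the $w^P$ cancellation, this is $\ell_T(w_P^{-1}v')=\ell_T(v',w_P)$ computed inside $P$ by the last remark in the ``Conventions'' discussion that $\ell_T$ is intrinsic to $P$). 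Then I would take a shortest \emph{directed} path $u=x_0\to x_1\to\cdots\to x_k=w$ in $\mathcal{B}(W)$ with $k=\ell_T(u,w)$, apply $\fl^W_P$ to it, and argue that $\fl^W_P(x_i)$ gives a directed walk in $\mathcal{B}(P)$ from $v'$ to $w_P$ of length $\leq k$; combined with $\ell_T(v',w_P)=k$ and the trivial inequality $\ell_T\leq\ell_D$, this forces equality, so $w_P$ is Hultman.

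The main obstacle is showing that flattening a directed Bruhat-graph edge $x\to xt$ (with $t\in T$, $\ell(xt)>\ell(x)$) yields either a directed edge or a trivial step in $\mathcal{B}(P)$, i.e.\ that $\fl^W_P(x)\to\fl^W_P(xt)$ is correctly directed when the two differ. If $t\in P$ this is immediate from Theorem~\ref{thm:flattening}: $\fl^W_P(xt)=\fl^W_P(x)t$ and the length comparison in $P$ follows since $\fl^W_P(x)\leq_P\fl^W_P(x)t$ would need to be checked—but in fact when $t\in P$ the inversions just restrict, and one gets $\fl^W_P(x)t>_P\fl^W_P(x)$ or $<_P$, and the ``$>$'' case is what we need, which follows from $x<xt$ via a symmetric-difference-of-inversion-sets argument (this is essentially how $P$-equivariance is proved). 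If $t\notin P$, I claim $\fl^W_P(x)=\fl^W_P(xt)$: indeed $\Inv(xt)\cap P = \Inv(x)\cap P$ because the inversion sets of $x$ and $xt$ differ only by reflections ``in the direction of $t$,'' none of which lie in $P$ when $t\notin P$—this requires a careful root-theoretic justification using that the positive roots of $P$ are exactly the positive roots of $W$ lying in the subspace $A$ (the parenthetical care taken in the definition of $R$ is exactly for this). Making this last point rigorous—controlling which reflections can enter or leave the inversion set under right-multiplication by $t$, and confirming they avoid $P$ precisely when $t\notin P$—is the technical heart; it may be cleanest to phrase everything in terms of inversion sets of \emph{roots} and the fact (standard, e.g.\ via \cite{Dyer}) that $\Inv(xt)\,\triangle\,\Inv(x)$ is contained in the set of reflections $s$ with $s$ fixing a hyperplane meeting the $t$-hyperplane appropriately, together with the parabolic closure property of the root subsystem of $P$.
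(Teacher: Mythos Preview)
Your overall strategy---prove the contrapositive by flattening a minimal directed path from a lift of $v'$ up to $w$---is reasonable, and the case $t\in P$ is fine: if $x<xt$ with $t\in P\cap T$, then $\fl^W_P(x)$ and $\fl^W_P(x)t$ are adjacent in $\mathcal{B}(P)$, and the contrapositive of Theorem~\ref{thm:flattening}(2) forces $\fl^W_P(x)<_P\fl^W_P(x)t$.  The genuine gap is the case $t\notin P$.  Your claim that $\fl^W_P(x)=\fl^W_P(xt)$ whenever $t\notin P$ is \emph{false}.  Already in $W=S_3$ with $P=\langle s_1\rangle$ (a parabolic, fixing the line orthogonal to $\alpha_1$), $t=s_2\notin P$, and $x=s_1=213$, one has $xt=231$, $\Inv(x)\cap P=\{s_1\}$, but $\Inv(xt)\cap P=\varnothing$, so $\fl^W_P(x)=s_1\neq e=\fl^W_P(xt)$.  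The heuristic that right-multiplication by $t$ only disturbs reflections ``in the direction of $t$'' is simply wrong: for $\alpha$ a positive root of $P$, membership of $s_\alpha$ in $\Inv(xt)$ is governed by $s_t(\alpha)$, which need not lie in the root subsystem of $P$ at all.  No amount of care with root systems will rescue this step.

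The paper handles the $t\notin P$ case by an entirely different mechanism, namely Corollary~\ref{cor:carter} (Carter's lemma).  Arguing directly rather than contrapositively, it takes an \emph{arbitrary} directed path $x=x_0\to\cdots\to x_k=w$ in $\mathcal{B}(W)$ and shows $k>\ell_T(x,w)$ in both cases.  If every $x_j\in wP$ then all $t_i\in P$, and flattening (via Theorem~\ref{thm:flattening}) produces a genuine directed path of length $k$ from $u$ to $v$ in $\mathcal{B}(W')$, forcing $k\geq\ell_D(u,v)>\ell_T(u,v)=\ell_T(x,w)$.  If some $x_j\notin wP$ then the corresponding $t_j\notin P$; since $w^{-1}x=t_k\cdots t_1\in P$ fixes the subspace $A$ defining $P$ while $t_j$ does not, Corollary~\ref{cor:carter} gives $\ell_T(w^{-1}x)<k$ immediately.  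Your contrapositive approach can be repaired with the same tool: on a \emph{shortest} directed path (length $k=\ell_T(x,w)$), Corollary~\ref{cor:carter} forbids any $t_j\notin P$, so the path stays in $wP$ and your flattening goes through.  The detour through the coset decomposition $w=w^P\cdot w_P$ is then unnecessary; working directly in the coset $wP$ as the paper does is cleaner.
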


\begin{proof}
By definition, we have some parabolic subgroup $P\subseteq W$ with an isomorphism $\phi: W'\rightarrow P$ such that $\fl^W_P(w)=\phi(v)$.  Since $v$ is not Hultman, there exists $u\leq_{W'} v$ such that $\ell_D(u,v)>\ell_T(u,v)$.  Now let
$x=w\phi(v^{-1}u)$.  Note that $$\ell_T(x,w)=\ell_T(w^{-1}x,\id)=\ell_T(w^{-1}x)=\ell_T(\phi(v^{-1}u))=\ell_T(v^{-1}u)=\ell_T(u,v).$$

Now consider a directed path $$x=x_0, x_1,\ldots, x_k=w$$ of length $k$ in $\mathcal{B}(W)$.  If $x_j\in wP$ for all $j$, then
$\fl^W_P(x_i)=\fl^W_P(w)(w^{-1}x_i)=\phi(v)w^{-1}x_i$ for all $i$, and we have a directed path $$u=\phi^{-1}(\fl^W_P(x_0)), \phi^{-1}(\fl^W_P(x_1)), \ldots, \phi^{-1}(\fl^W_P(x_k))=v$$ from $u$ to $v$ in $\mathcal{B}(W')$.  This is a path in $\mathcal{B}(W')$ because, for all $i$, $x_i=x_{i-1}t_i$ for some $t_i\in T\cap P$, so $v\phi^{-1}(w^{-1}x_i)=v\phi^{-1}(w^{-1}x_{i-1})\phi^{-1}(t_i)$.  This path is appropriately directed, meaning $v\phi^{-1}(w^{-1}x_i)>_{W'}v\phi^{-1}(w^{-1}x_{i-1})$ as, otherwise, we would have $\fl^W_P(x_i)<_P \fl^W_P(x_{i-1})$, and, hence by Theorem~\ref{thm:flattening}, since $x_i=x_{i-1}t_i$ and $t_i\in P$, we would have $x_i<x_{i-1}$, which is false by assumption.  Therefore, since $k$ is the length of a directed path from $u$ to $v$ in $\mathcal{B}(W')$, $k\geq\ell_D(u,v)>\ell_T(u,v)=\ell_T(x,w)$.

On the other hand, suppose there exists $x_j\not\in wP$.  For convenience, we choose the first such $j$, so $x_{j'}\in wP$ for all $j'<j$.  Let $t_i=x_{i}^{-1}x_{i-1}$ for all $i$, $1\leq i\leq k$.  Then $t_j\not\in P$.  However, $w^{-1}x=t_k\cdots t_1\in P$.  Hence, by Corollary~\ref{cor:carter}, $\ell_T(w^{-1}x)<k$.  Since any directed path from $x$ to $w$ in $\mathcal{B}(W)$ has length greater than $\ell_T(x,w)$, we must have that $w$ is not Hultman.
\end{proof}

\section{Coessential sets}
\label{sect:essential}
The purpose of this section is to describe the relation between our results and the Coxeter-theoretic coessential set defined in~\cite{RWY}.

Let $W$ be an arbitrary Coxeter group, and let $w\in W$.  The {\bf (Coxeter-theoretic) coessential set} of $w$, denoted $\mathcal{E}(w)$, is the set of minimal elements of $$\{v\in W\mid v\not\leq w\}.$$  An element of $W$ is {\bf basic} if it is an element of $\mathcal{E}(w)$ for some $w\in W$.  Hence, the set $\mathbf{B}(W)$ of all basic elements of $W$ is
$$\mathbf{B}(W):=\bigcup_{w\in W} \mathcal{E}(w).$$
An element $u\in W$ is {\bf bigrassmannian} if all reduced expressions for $u$ share the same initial and final element.  In other words, given $u=s_1\cdots s_{\ell(u)}=s'_1\cdots s'_{\ell(u)}$, then $s_1=s'_1$ and $s_{\ell(u)}=s'_{\ell(u)}$.
It is a theorem of Lascoux and Sch\"utzenberger~\cite{LS} (with a subsequent different proof due to Geck and Kim~\cite{GK}) that basic elements are bigrassmannian.  The converse is true for $W=S_n$ but not in general.

For $W=S_n$, elements of $\mathcal{E}(w)$ correspond to elements of $E(w)$ as follows.  Given a box at $(p,q)$ with $r=r_w(p,q)$, we have a unique minimal permutation $v=v(p,q,r)$ with $r_v(p,q)=r+1$.  To be precise, $$v(p,q,r)=1\cdots (q-r-1)p\cdots(p+r)(q-r)\cdots(p-1)(p+r+1)\cdots n,$$ written in 1-line notation.  The following proposition relates properties of the Coxeter-theoretic coessential set of an element $w\in S_n$ with the property of $w$ being defined by inclusions.

\begin{proposition}
Let $w\in S_n$.  An element $(p,q)\in E(w)$ corresponds to an element of $\mathcal{E}(w)$ with a unique reduced expression if and only if $r_w(p,q)=r_\id(p,q)=\max(0,q-p+1)$.
\end{proposition}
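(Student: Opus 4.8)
The plan is to compute directly from the explicit one-line formula for $v = v(p,q,r)$ when exactly it has a unique reduced expression, and to compare with the condition $r = \max(0, q-p+1)$. Recall that $v = v(p,q,r)$ is obtained from the identity by taking the block $p, p+1, \ldots, p+r$ and moving it left so that it occupies positions $q-r, \ldots, q$, while the block $q-r, \ldots, p-1$ slides right into positions $q-r-1+\cdots$ — more precisely the one-line form displayed above the proposition. An element of $S_n$ is bigrassmannian (all reduced words share the same first and last letter), and in fact for $S_n$ the bigrassmannian elements are exactly the $v(p,q,r)$; the question is which of these have a \emph{unique} reduced word.

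First I would record the standard fact that a permutation has a unique reduced expression if and only if it avoids both $321$ and $3412$ as patterns; equivalently, its Bruhat lower interval is a chain, equivalently $\ell(v)$ equals the number of $(i,j)$ with $i<j$, $v(i)>v(j)$ and no "nesting" occurs. For the specific permutation $v(p,q,r)$, the inversions are precisely the pairs coming from the moved block crossing the slid block. I would count: the left block has $r+1$ entries $\{p,\ldots,p+r\}$ now sitting before the $(p-1)-(q-r)+1 = p-q+r$ entries $\{q-r,\ldots,p-1\}$, and each of the former is larger than each of the latter, giving $\ell(v) = (r+1)(p-q+r)$ inversions (valid when $p > q-r$, i.e. $r > q-p$; otherwise a symmetric count applies when $r=0$ and $q\geq p$, where $v$ is a single cycle / simple reflection-like element). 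The element $v$ has a unique reduced word exactly when this inversion set is "linearly ordered," which forces one of the two factors $r+1$ or $p-q+r$ to equal $1$: indeed if both are $\geq 2$ we exhibit a $3412$ pattern (two elements of the big block over two elements of the small block interleave), and if one factor is $1$ the remaining inversions form a chain and $v$ is a product of consecutive simple transpositions.

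So $v(p,q,r)$ has a unique reduced expression iff $r+1 = 1$ or $p - q + r = 1$, i.e. iff $r = 0$ or $r = q-p+1$. The last step is to match this with the claim. When $q \geq p$, $\max(0, q-p+1) = q-p+1 \geq 1$; since $r = r_w(p,q) \geq \max(0,q-p+1) = q-p+1 \geq 1$ always, the case $r=0$ is impossible, so "unique reduced word" $\iff r = q-p+1 = \max(0,q-p+1)$. When $q < p$, $\max(0,q-p+1) = 0$; here $r \geq 0$ and $r = q-p+1 < 0$ is impossible, so "unique reduced word" $\iff r = 0 = \max(0,q-p+1)$. In both cases "unique reduced word" $\iff r_w(p,q) = \max(0,q-p+1) = r_\id(p,q)$, which is exactly the assertion. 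I would need to double-check the boundary/degenerate cases ($q-p+1 = 0$ exactly, or $r+1$ and $p-q+r$ both equal to $1$ simultaneously, which forces $v$ a single simple transposition — still unique), but these are routine.

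\textbf{Main obstacle.} The only genuinely delicate point is the combinatorial lemma that $v(p,q,r)$ has a unique reduced expression precisely when one of the two "block sizes" is $1$; this requires correctly identifying the inversion set of $v(p,q,r)$ from its one-line notation and then invoking the $\{321, 3412\}$-avoidance characterization (or, equivalently, directly exhibiting a braid or commuting move when both blocks are large, versus showing the reduced word is forced otherwise). Everything after that is bookkeeping with $\max(0, q-p+1)$ and the always-true inequality $r_w(p,q) \geq r_\id(p,q)$.
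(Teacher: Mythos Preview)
Your argument reaches the right conclusion, but the ``standard fact'' you invoke is false as an if-and-only-if. The permutation $2143=s_1s_3=s_3s_1$ avoids both $321$ and $3412$ yet has two reduced expressions; and ``Bruhat lower interval is a chain'' is even more restrictive (it forces $w\in\{\id\}\cup S$ --- you may be thinking of \emph{weak} order, where the equivalence with uniqueness of the reduced word does hold). The correct characterization is that $w\in S_n$ has a unique reduced expression if and only if $w=s_as_{a\pm1}\cdots s_b$ is a product of consecutive simple reflections in monotone order. Fortunately you only need one implication --- that containing $3412$ forces multiple reduced expressions --- and that implication is true, since such monotone products visibly avoid $3412$. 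With this repair your block-size dichotomy is sound: one block of size $1$ gives a monotone product, while two blocks of size $\geq 2$ produce a $3412$ pattern and hence a non-monotone element. The final bookkeeping with $\max(0,q-p+1)$ and the inequality $r_w(p,q)\geq r_\id(p,q)$ is correct.

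By comparison, the paper's proof is shorter and bypasses pattern avoidance: it reads off from the one-line form of $v(p,q,r)$ that the cases $r=0$ and $r=q-p+1$ yield exactly $v=s_{p-1}\cdots s_q$ (with indices decreasing or increasing, respectively), and implicitly relies on the same underlying fact that these monotone products exhaust the permutations with a unique reduced word. Your route makes the ``two large blocks'' obstruction explicit via the length count $(r+1)(p-q+r)$; the paper's route is terser but leaves the ``only if'' direction just as implicit.
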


\begin{proof}
The element $$v(p,q,r)=1\cdots (q-r-1)p\cdots(p+r)(q-r)\cdots(p-1)(p+r+1)\cdots n$$ has a unique reduced expression precisely if $p=p+r$, in which case $r=0$ and $$v(p,q,r)=s_{p-1}\cdots s_{q-r}=s_{p-1}\cdots s_q$$ as a product of consecutive (in the Dynkin diagram) simple reflections, or if $p-1=q-r$, in which case $r=q-p+1$ and $$v(p,q,r)=s_{q-r}\cdots s_{p+r-1}=s_{p-1}\cdots s_q.$$  The two cases differ by whether the indices are increasing or decreasing.  (Note that, if $q-p+1>0$, so $q\geq p$, then $r=0$ is not possible.)
\end{proof}

This means that $w$ is defined by inclusions if and only if every element of $\mathcal{E}(w)$ has a unique reduced expression.

Given $W=B_n\subseteq S_{2n}$ and $w\in W$, the elements of $E(w)$ (considering $w$ as an element of $S_{2n}$) come in pairs that are rotationally symmetric around $(n+1,n)$.  In particular, if $(p,q)\in E(w)$, then $(2n+2-p, 2n-q)\in E(w)$, with $$r_w(2n+2-p, 2n-q)=q-p+1+r_w(p,q).$$

However, in some cases, the set $E(w)$ may not be minimal for the purposes of determining if $v\leq w$ for $v\in B_n$.  In particular, a pair $$\{(p,q), (2n+2-p,2n-q)\}\subseteq E(w)$$ may not be needed to determine if $v\leq w$.  For any $v\in B_n$, if $r_v(2n+2-p,q)\leq r$ with $p,q\leq n$, then, since either $v(i)>n\geq q$ or $v(2n+1-i)>n\geq q$ for any $i$, and in particular for $i$ with $p\leq i\leq n$ (so $n<2n+1-i<2n+2-p$), we must have $r_v(p,q)\leq r-(n-p+1)$.
Suppose for some $p,q\leq n$, both $(p,q)\in E(w)$ and $(2n+2-p,q)\in E(w)$, and $r_w(2n+2-p,q)=r_w(p,q)+p-q-1$.  Then, in this case, for any $v\in B_n$, if $r_v(2n+2-p,q)\leq r_w(2n+2-p,q)$, then automatically $r_v(p,q)\leq r_w(p,q)$.  Hence, to check
if $v\leq w$, it is not necessary to explicitly check if $r_v(p,q)\leq r_w(p,q)$.

Given $w\in B_n$, let $E'(w)=E(w)\setminus S$, where $S$ is the set of all redundant essential boxes.  To be precise, $S$ contains all boxes $(p,q), (2n+2-p, 2n-q) \in E(w)$ where $p,q<n$, $(2n+2-p,q)\in E(w)$, and $r_w(2n+2-p,q)=r_w(p,q)+p-n-1$.

In fact, these are the only redundant conditions.  Anderson~\cite{And} shows the following, in part using a geometric version of the above argument.

\begin{theorem}
The set $E'(w)$ is the unique minimal set satisfying both
\begin{itemize}
\item We have $(p,q)\in E'(w)$ if and only if $(2n+2-p,2n-q)\in E'(w)$.
\item For any $v\in B_n$, $v\leq w$ if and only if $r_v(p,q)\leq r_w(p,q)$ for all $(p,q)\in E'(w)$.
\end{itemize}
\end{theorem}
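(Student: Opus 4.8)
The plan is to prove two things: that $E'(w)$ itself satisfies the symmetry condition and the Bruhat condition of the theorem, and that it is contained in every set $\mathcal{F}$ satisfying both, so that it is the unique minimal such set. The symmetry condition holds for $E'(w)$ by construction, since the removed set $S=E(w)\setminus E'(w)$ is by definition a union of rotationally symmetric pairs of boxes.

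For the Bruhat condition applied to $E'(w)$, the forward implication is immediate: Bruhat order on $B_n$ is induced from $S_{2n}$, so $v\le w$ forces $r_v(p,q)\le r_w(p,q)$ at every box, in particular at every box of $E'(w)\subseteq E(w)$. For the reverse implication, suppose $r_v(p,q)\le r_w(p,q)$ for all $(p,q)\in E'(w)$; by Fulton's lemma it suffices to recover this inequality at each removed box $(p,q)\in S$. Since $S$ is a union of symmetric pairs and, for $v\in B_n$, the identity $r_v(2n+2-p,2n-q)=r_v(p,q)+(p-q-1)$ makes the two members of a pair impose equivalent conditions on $v$, we may assume $p,q<n$; by the definition of $S$ the box $(2n+2-p,q)$ then lies in $E(w)$ with $r_w(2n+2-p,q)=r_w(p,q)+p-n-1$. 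First I would observe that $(2n+2-p,q)$ is never itself removed: its first coordinate $2n+2-p$ exceeds $n+1$, so it cannot be the lower-left member of a removed pair, and it cannot be the upper-right member $(2n+2-p',2n-q')$ of a pair removed because of some $(p',q')$ with $p',q'<n$, as that would force $q'=2n-q>n$. Thus $(2n+2-p,q)\in E'(w)$, and $r_v(2n+2-p,q)\le r_w(2n+2-p,q)$ is among our hypotheses. Next I would invoke the pairing inequality: for any $v\in B_n$ and $p,q\le n$, $$r_v(p,q)-r_v(2n+2-p,q)=\#\{k\le q\mid p\le v(k)\le 2n+1-p\}\le n-p+1,$$ since the values in $[p,2n+1-p]$ occur at positions forming $n-p+1$ symmetric pairs $\{k,2n+1-k\}$, of which at most one member can be $\le q$. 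Combining, $r_v(p,q)\le r_v(2n+2-p,q)+(n-p+1)\le r_w(2n+2-p,q)+(n-p+1)=r_w(p,q)$, as needed.

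For minimality, let $\mathcal{F}$ satisfy both conditions and fix a symmetric pair $\{(p,q),(2n+2-p,2n-q)\}\subseteq E'(w)$ with $r:=r_w(p,q)$; I want to show this pair lies in $\mathcal{F}$. The key claim is that the element $v=v(p,q,r)\in B_n$ attached to the pair in the introduction satisfies $r_v(p',q')>r_w(p',q')$ at precisely the two boxes $(p,q)$ and $(2n+2-p,2n-q)$. Granting the claim, $v\not\le w$ since $v$ fails its own conditions, so by the reverse direction of the Bruhat condition for $\mathcal{F}$ there is a box of $\mathcal{F}$ at which $r_v$ exceeds $r_w$; by the claim that box lies in the pair, so $\mathcal{F}$ meets the pair, and then by the symmetry condition $\mathcal{F}$ contains both members of the pair. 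As the pair was arbitrary, $E'(w)\subseteq\mathcal{F}$. To prove the claim I would write $v$ as the least upper bound in $S_{2n}$-Bruhat order of the bigrassmannian $a:=v(p,q,r)\in S_{2n}$ and its image $b$ under conjugation by the long element of $S_{2n}$ (also bigrassmannian); this join lies in $B_n$ because that conjugation interchanges $a$ and $b$, and it coincides with the element of the introduction since $u\in B_n$ fails the $(p,q)$-condition exactly when $u$ lies above both $a$ and $b$. Since $(p,q)\in E(w)$, the Lascoux--Sch\"utzenberger description of the coessential set recalled above gives $r_a>r_w$ only at $(p,q)$, and hence, using $w_0ww_0=w$, that $r_b>r_w$ only at $(2n+2-p,2n-q)$. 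It then remains to show that forming the join creates no new box where $r_v>r_w$: the functions $r_a$ and $r_b$ exceed $r_\id$ on two explicit rectangles of boxes determined by $(p,q,r)$ and its rotation, and the claim follows once one checks that $\max(r_a,r_b)$ is already a valid rank function, so that $r_v=\max(r_a,r_b)$.

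The main obstacle is this last step, and it is exactly where non-redundancy of the pair (membership in $E'(w)$ rather than merely in $E(w)$) is used: for a redundant pair the join does jump up at a third box. Establishing it amounts to a finite but delicate case analysis of how the two supporting rectangles overlap, organized by how the indices cut out by the pair sit relative to $n$ and $n+1$; it is the $B_n$ analogue of the type-A fact that a bigrassmannian $v(p,q,r_w(p,q))$ attached to an essential box violates $w$ at that one box only. Anderson~\cite{And} instead proves the equivalent statement geometrically, in terms of incidence conditions on isotropic flags for the symplectic flag variety, which trades the rectangle bookkeeping for intersection-theoretic arguments on degeneracy loci.
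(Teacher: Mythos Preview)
The paper does not give its own proof of this theorem: it attributes the result to Anderson~\cite{And} and only remarks that Anderson's argument is ``in part \ldots\ a geometric version'' of the redundancy computation that precedes the theorem statement. So there is no in-paper proof to compare against directly; what can be compared is your argument with that pre-theorem discussion and with what the paper reports about Anderson.

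For the direction that $E'(w)$ suffices to determine Bruhat order, your argument is essentially the paper's own redundancy argument, carried out more carefully: you use the same pairing inequality $r_v(p,q)-r_v(2n+2-p,q)\le n-p+1$ for $v\in B_n$, and you add the necessary check that the witness box $(2n+2-p,q)$ is not itself removed (so lies in $E'(w)$), which the paper omits. This part is correct and complete.

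For minimality, however, there is a genuine gap. Your ``key claim'' --- that $v=v(p,q,r)$ satisfies $r_v>r_w$ at \emph{exactly} the two boxes of the pair, among all boxes --- is the substance of the result, and you do not prove it. The reduction you propose, namely that $r_v=\max(r_a,r_b)$ where $a,b$ are the two $S_{2n}$-bigrassmannians, is not automatic: joins in Bruhat order do not in general have rank function equal to the pointwise maximum, so ``checking that $\max(r_a,r_b)$ is already a valid rank function'' is a real task, and one that is sensitive to exactly how the supporting rectangles of $a$ and $b$ intersect. You correctly identify this as the point where non-redundancy of the pair enters, but the case analysis is not carried out, and it is not clear from your sketch that the four regimes in Table~\ref{table:essential-corresp} would all yield $r_v=\max(r_a,r_b)$. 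Anderson, as you note, bypasses this bookkeeping via incidence conditions on isotropic flags; so your proposed route is genuinely different from the one the paper cites, but as written it stops short of the hard step.
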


Furthermore, Anderson~\cite[p.13]{And} also gives for each pair of triples $\{(p,q,r), (2n+2-p, 2n-q,p-q-1+r)\}$ the minimal element $$v(p,q,r)=v(2n+2-p, 2n-q, p-q-1+r)\in B_n$$ such that $r_{v(p,q,r)}(p,q)>r$, thus explicitly giving a way of calculating $\mathcal{E}(w)$ as
$$\mathcal{E}(w)=\{v(p,q,r)\mid (p,q)\in E'(w), r=r_w(p,q)\}.$$
This correspondence is described in Table~\ref{table:essential-corresp} for the case where either $q<n$ or both $q=n$ and $p\geq n+1$.  Only the first half (meaning $w(1)\cdots w(n)$) of the elements $v(p,q,r)$ are listed, and $a\cdots b$ should be taken to be empty if $a>b$.  The remaining cases can be inferred from the symmetry $v(p,q,r)=v(2n+2-p, 2n-q, p-q-1+r)$.

\begin{figure}
\begin{tabular}{|c|l|}
\hline
 & $v(p,q,r)$ \\
\hline
$p+r\leq n$ & $1\cdots (q-r-1)p\cdots(p+r)(q-r)\cdots(p-1)(p+r+1)\cdots n$ \\
$p\leq n<p+r$ & $1\cdots(q-r-1)p\cdots n(2n+2-p)\cdots(n+r+1)(q-r)\cdots(n-r-1)$ \\
$n<p$, $p+q<2n+2$ & $1\cdots(q-r-1)p\cdots(p+r)(q-r)\cdots(2n-p-r)(2n+2-p)\cdots n$ \\
$p+q\geq 2n+2$ & $1\cdots(2n-p-r)(2n+2-p)\cdots q p\cdots(p+r)(q+1)\cdots n$ \\
\hline
\end{tabular}
\caption{$B_n$ elements corresponding to coessential boxes where $q<n$ or both $q=n$ and $p>n$.\label{table:essential-corresp}}
\end{figure}

Consulting the table and calculating reduced expressions gives the following proposition.

\begin{proposition}
Let $w\in B_n\subseteq S_{2n}$.  Then $w$ is defined by pseudo-inclusions if and only if, for all $(p,q)\in E(w)$, $v(p,q,r_w(p,q))$ has a unique reduced expression.
\end{proposition}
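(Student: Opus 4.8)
The plan is to follow the template of the preceding proposition for $S_n$: one uses Table~\ref{table:essential-corresp} to write each element $v(p,q,r)$ as an explicit word in the simple generators $s_0,\dots,s_{n-1}$ of $B_n$, and then determines exactly when that word is rigid. The basic tool is the standard consequence of Tits's theorem that an element of a Coxeter group has a unique reduced expression if and only if no reduced word for it admits a braid move; for $B_n$ this unwinds into three local conditions on a reduced word $s_{i_1}\cdots s_{i_\ell}$: no two consecutive letters commute (equivalently $|i_j-i_{j+1}|=1$ for every $j$), no consecutive factor equals $s_is_{i+1}s_i$ or $s_{i+1}s_is_{i+1}$ with $i\geq 1$, and no consecutive factor equals $s_0s_1s_0s_1$ or $s_1s_0s_1s_0$. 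Because $E(w)$ is invariant under the rotation $(p,q)\mapsto(2n+2-p,2n-q)$, because $v(p,q,r)=v(2n+2-p,2n-q,p-q-1+r)$, and because $r_w(p,q)=\max(0,q-p+1)$ is equivalent (via $r_w(2n+2-p,2n-q)=p-q-1+r_w(p,q)$) to the corresponding condition at the rotated box, it suffices to treat the four rows of Table~\ref{table:essential-corresp}, that is, the boxes with $q<n$, or with $q=n$ and $p>n$.

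The first row, $p+r\leq n$, reduces immediately to the $S_n$ case: the one-line notation displayed there is precisely the $S_n$ formula of the preceding proposition, only positions and values in $\{1,\dots,n\}$ occur, so $v(p,q,r)$ lies in the parabolic subgroup $\langle s_1,\dots,s_{n-1}\rangle\cong S_n$, and that proposition applies to give that $v(p,q,r)$ has a unique reduced expression if and only if $r=\max(0,q-p+1)$. For the remaining three rows I would write an explicit reduced word for $v(p,q,r)$ directly from the one-line notation (its second half being forced by $w(i)+w(2n+1-i)=2n+1$, so a reduced word is produced by the usual sorting/inversion procedure for $B_n$). When $r$ equals its minimal value $\max(0,q-p+1)$ the resulting word is a monotone ``staircase'', a run $s_a s_{a\pm1}s_{a\pm2}\cdots$ of consecutively indexed generators (possibly terminating at $s_0$), and the rigidity criterion confirms it has a unique reduced expression. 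The central box $(p,q)=(n+1,n)$ carries one extra rigid value of $r$ beyond this: there $\max(0,q-p+1)=0$ with $v(n+1,n,0)=s_0$, while a direct computation gives $v(n+1,n,1)=s_0s_1s_0\in\langle s_0,s_1\rangle\cong B_2$, which is rigid because $m(s_0,s_1)=4>3$, whereas $v(n+1,n,r)$ for $r\geq 2$ acquires a factor $s_0s_1s_0s_1$ or an adjacent commuting pair. For every non-central box and every $r$ strictly above $\max(0,q-p+1)$, tracking how the staircase deforms as $r$ increases shows that the reduced word of $v(p,q,r)$ contains a factor admitting a braid move --- an interior valley $s_{i+1}s_is_{i+1}$ with $i\geq 1$, an adjacent commuting pair $s_is_j$ with $|i-j|\geq 2$, or a factor $s_0s_1s_0s_1$ --- so $v(p,q,r)$ then has more than one reduced expression.

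Assembling the cases, for each $(p,q)\in E(w)$ the element $v(p,q,r_w(p,q))$ has a unique reduced expression exactly when $r_w(p,q)=\max(0,q-p+1)$, or $(p,q)=(n+1,n)$ and $r_w(n+1,n)=1$. Quantifying over all $(p,q)\in E(w)$, this is precisely the statement that $w$ is defined by pseudo-inclusions, proving the proposition.

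I expect the main obstacle to be the bookkeeping in rows 2--4 of Table~\ref{table:essential-corresp}: one must translate the displayed first halves of $v(p,q,r)$ into honest reduced words of elements of $B_n$ --- which, unlike in type $A$, can genuinely involve $s_0$ and need not be monotone in the generator indices --- and then verify that the lower endpoint $r=\max(0,q-p+1)$ of the admissible range of $r$ is exactly the rigidity threshold, with the single extra rigid value occurring at the central box. Once the reduced words are written down this is a finite, structured check, but getting the staircase shapes right across all four rows, including the degenerate cases where various segments in the formulas are empty, is where the care is needed.
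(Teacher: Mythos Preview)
Your approach is essentially the paper's: both reduce by the rotational symmetry of $E(w)$ to the boxes covered by Table~\ref{table:essential-corresp} and then read off a reduced word for each $v(p,q,r)$ from the one-line formulas there, with the paper invoking the classification of $B_n$-elements having a unique reduced expression for the converse where you instead propose a direct case-check of the non-pseudo-inclusion values of $r$. One small correction to anticipate: for $p>n$ and $r=0$ (rows~3 and~4 of the table) the reduced word of $v(p,q,0)$ is not a monotone staircase but the V-shape $s_{p-n-1}\cdots s_1 s_0 s_1\cdots s_{n-q}$, descending to $s_0$ and then ascending; your rigidity test still confirms uniqueness (consecutive indices throughout, the only valley is at index~$0$ so no factor $s_{i+1}s_is_{i+1}$ with $i\geq 1$, and no $s_0s_1s_0s_1$), so the conclusion is unaffected, but expect this shape rather than a purely monotone one when you fill in rows~3 and~4.
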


\begin{proof}
By the symmetry of $E(w)$, we only need to consider the case where $q<n$ or both $q=n$ and $p>n$.  If $q\geq p$, so $\max(0,q-p+1)=q-p+1$, then we have $q<n$ and hence $p+r\leq n$ by our assumptions, so
$$v(p,q,q-p+1)=1\cdots (p-2)p\cdots (q+1)(p-1)(q+2)\cdots n=s_{n-p+1}\cdots s_{n-q},$$
where the indices in the last expression are decreasing.

Otherwise, we first treat the case $r=0$.  If $p\leq n$, then
$$v(p,q,0)=1\cdots (q-1)pq\cdots(p-1)(p+1)\cdots n=s_{n-p+1}\cdots s_{n-q},$$
where the indices in the last expression are increasing.
If $p>n$, then either $p+q<2n+2$, in which case
$$v(p,q,0)=1\cdots(q-1)pq\cdots(2n-p)(2n+2-p)\cdots n=s_{p-n-1}\cdots s_0\cdots s_{n-q},$$
or $p+q\geq 2n+2$, in which case
$$v(p,q,0)=1\cdots(2n-p)(2n+2-p)\cdots qp(q+1)\cdots n=s_{p-n-1}\cdots s_0\cdots s_{n-q}.$$
In both cases, we take indices in the last expression to be first decreasing then increasing.

Finally, if $p=n+1$, $q=n$, and $r=1$, then
$$v(n+1,n,1)=1\cdots(n-2)(n+1)(n+2)=s_0 s_1 s_0.$$

These are the only elements of $B_n$ with a unique reduced expression, so the proposition is proved.
\end{proof}

Unfortunately, the proposition applies to $E(w)$ and not $E'(w)$, so it does {\em not} imply a statement about the Coxeter-theoretic coessential set $\mathcal{E}(w)$.  The following example makes this difference clear.

\begin{example}
\label{exa:426153}
Let $n=3$ and $w=426153$.  Then $E(w)=\{((3,2), (5,2), (5,4), (3,4)\}$.  We see that $w$ is not defined by pseudo-inclusions since $r_w(3,2)=1\neq\max(0, 2-3+1)$.  Moreover, $v(3,2,1)=v(5,4,1)=351624=s_1 s_0 s_2 s_1 = s_1 s_2 s_0 s_1$.  However, $E'(w)=\{(5,2), (3,4)\}$, with $r_w(5,2)=0$.  The permutation $v(5,2,0)=v(3,4,2)=153426=s_1 s_0 s_1$, which does have a unique reduced expression.  Note $153426<351624$ and $\mathcal{E}(w)=\{153426\}$.  The element $w$ fails the Hultman condition since, if $u=132546$, then $\ell_D(u,w)=4$ but $\ell_T(u,w)=2$.  It may be significant that this element also fails to satisfy Proposition~\ref{prop:unflattenbruhat}, as noted at the end of Section~\ref{sect:prelim}.
\end{example}

\section*{Acknowledgments}

I thank William Slofstra for helpful discussions.  This paper was completed while I am on sabbatical at the Department of Mathematics at the University of Illinois at Urbana--Champaign, and I thank the department for its hospitality.
I am supported by Simons Collaboration Grant 359792.

\end{document}